\title{On norm continuity, differentiability and compactness of perturbed semigroups}
\author{A. Boulouz, H. Bounit, A. Driouich and S. Hadd}
\address{Department of Mathematics, Faculty of Sciences, Ibn Zohr University, Hay Dakhla, BP8106, 80000--Agadir, Morocco; abed.boulouz@edu.uiz.ac.ma, h.bounit@uiz.ac.ma, a.driouich@uiz.ac.ma, s.hadd@uiz.ac.ma}
\thanks{}
 \keywords{Operator semigroup, unbounded perturbation, norm continuity, compactness, differentiability, Bergman space, feedback theory, integro-differential equations}
\newtheorem{thm}{Theorem}[section]
\newtheorem{lem}[thm]{Lemma}
\theoremstyle{definition}
\newtheorem{defn}[thm]{Definition}
\newtheorem{exa}[thm]{Example}
\newtheorem{rem}[thm]{Remark}
\numberwithin{equation}{section}
\newcommand{\R}{{\mathbb R}}
\newcommand{\T}{{\mathbb T}}                   
\newcommand{\C}{{\mathbb C}}                  
\newcommand{\F}{{\mathbb F}}
\newcommand{\B}{{\mathbb B}}
\def\al{\alpha}
\def\om{\omega}
\def\ga{\gamma}
\def\si{\sigma}
\def\Si{\Sigma}
\def\la{\lambda}
\def\t{\tau}
\def\La{\Lambda}
\def\calM{{\mathcal{M}}}
\def\calL{{\mathcal{L}}}
\def\calT{{\mathcal{T}}}
\def\calG{{\mathcal{G}}}
\def\calA{{\mathcal{A}}}
\def\calX{{\mathcal{X}}}
\def\calZ{{\mathcal{Z}}}
\def\calP{{\mathcal{P}}}
\def\R{\mathbb R}
\def\P{\mathbb P}
\def\C{\mathbb C}
\def\F{\mathbb F}
\def\L{\mathbb L}
\def\T{\mathbb T}
\sloppy \pagestyle{plain} \textwidth=16cm \textheight=22.5cm
\begin{document}
\maketitle

\renewcommand{\sectionmark}[1]{}
\begin{abstract}
The main purpose of this paper is to treat semigroups properties, like norm continuity, compactness and differentiability for perturbed semigroups in Banach spaces. In particular, we investigate three large classes of perturbations, Miyadera-Voigt,  Desch-Schappacher and Staffans-Weiss perturbations. Our approach is mainly based on feedback theory of Salamon-Weiss systems. Our results are applied to abstract boundary integro-differential equations in Banach spaces.
\end{abstract}

\section{Introduction}

In this paper, we investigate classical properties like norm continuity, compactness and differentiability for some classes of perturbed semigroups. To be more precise, let Banach spaces $X,Z$ such that $Z\subset X$,   $(A,D(A))$ a generator of a strongly continuous semigroup $\T:=(T(t))_{t\ge 0}$ on $X$ such that $D(A)\subset Z$ and a linear operator $\L\in\calL(\tilde{Z},\tilde{X})$ with $\tilde{Z}$ and $\tilde{X}$ are Banach spaces carefully chosen  in order that $A+\L$ with appropriate domain is well defined and  generates a strongly continuous semigroup $\T^{cl}:=(T^{cl}(t))_{\ge 0}$ on $X$ (this notation will be justified in the next section). Now the problem to be treated is: does semigroups  generated by  $A$ and $A+\L$ permute the aforementioned properties? As a matter of facts this problem is already considered by many authors and have some partial answers (depending on the type of perturbations).

The class of bounded perturbations, i.e. the case when $X=\tilde{Z}=\tilde{X}$ (so that $\L\in\calL(X)$) is mainly treated by  Phillips \cite{Phillips}. He  proved that if $(T(t))_{t\ge 0}$  is norm continuous (resp. compact) for $t>0$, then  the operator $(A+\L,D(A))$ generates a strongly continuous semigroup $(T^{cl}(t))_{t\ge 0}$ on $X$ is norm continuous (resp. compact) for $t>0$, as well. On the other hand, Phillips constructed a semigroup $(T(t))_{t\ge 0}$ which is norm continuous for $t>t_0$ with $t_0>0$ (i.e. eventually norm continuous)  but the semigroup $(T^{cl}(t))_{t\ge 0}$ is not norm continuous for $t>t_0$. Thus  eventual norm continuity and eventual compactness are, in general, not preserved under even bounded perturbations. It is shown in \cite{Engel Nagel} that in the case of compact perturbation operator $\L\in\calL(X)$, the eventual norm continuity is preserved for the perturbed semigroup whenever the initial semigroup is. In 1983, Pazy \cite{Pazy} (see also \cite{Renardy}) showed that the eventual differentiability of the semigroup $(T(t))_{t\ge 0}$ is not  translated to the perturbed semigroup generated by  $A+\L$ even if $\L$ is a bounded perturbation.  As shown in \cite{Batty1},   \cite{Batty}, \cite{Doytchinov}, \cite{Iley}, and \cite{Zhang.L}, extra conditions on the semigroup $\T$ are needed to assure the preservation of the eventual differentiability for the semigroup generated by $A+\L$.

Let us now analysing in profile the case of unbounded perturbations.  Three large classes of unbounded perturbations will be investigated. The first class: choose $\tilde{Z}=Z$ and $\tilde{X}=X$, then we say that $\L\in\calL(Z,X)$ is a {\em Miyadera-Voigt perturbation} for $A$ if
 there exist $\al>0$ and $\ga\in (0,1)$ such that for any $x\in D(A)$ we have
\begin{align*}
\int^\al_0 \|\L T(t)x\|dt\le \ga \|x\|.
\end{align*}
In this case the operator $(A+\L,D(A))$ generates  a strongly continuous semigroup $(\T^{cl}(t))_{t\ge 0}$ on $X$. As an example of application we cite the case of delay equations \cite{batkai}, \cite{Hadd-SF}. In general, we do not  have preservation of the aforementioned regularities under  Miyadera-Voigt perturbations. However,  for delay evolution equations with $L^p$--history spaces it show in \cite{batkai} (resp. \cite{Matrai1}) that if the free delay equation is governed by an immediately norm continuous semigroup (resp. immediately compact), then the delay semigroup in product spaces  associated with the delay equation is eventually norm continuous (resp. eventually compact). Moreover, some results on eventual differentiability for delay equations is obtained in \cite{Batty}. The second class of unbounded perturbations is the case $\tilde{Z}=X$ and $\tilde{X}=X_{-1},$ so that $\L\in\calL(X,X_{-1}),$ where $X_{-1}$ is the completion of $X$ with respect to the norm $\|x\|_{-1}:=\|(\la I-A)^{-1}x\|,\;x\in X$. The semigroup $\T$ can be extended to a strongly continuous semigroup $\T_{-1}:=(T_{-1}(t))_{t\ge 0}$ on $X_{-1},$ whose generator $A_{-1}:X\to X_{-1},$ the extension of $A$ to $X$. In this case we say that $\L$ is a {\em Desch-Schappacher perturbation} for $A$ if there exists $t_0>0$ such that
\begin{align*}
\int^{t_0}_0 T_{-1}(t_0-s)\L f(s)ds\in X,\qquad \forall f\in L^1([0,t_0],X).
\end{align*}
It is well-know (see \cite[chapter III-3-d]{Engel Nagel}) that for a such $\L,$ the part of the operator $A_{-1}+\L$ on $X$ (denoted by $A^{cl}:=(A_{-1}+\L)_{|X}$) generates a strongly continuous semigroup $\T^{cl}$ on $X$. For this kind of perturbations, M\'{a}trai \cite{Matrai} has showed that $\T^{cl}$ is immediately norm-continuous whenever the semigroup  $\T$ is (see also Jung \cite{Jung}). To the best of our knowledge, there is no results concerning the differentiability under Desch-Schappacher perturbations. This is one of the objectives  of this paper. In fact, we will show that if the generator $A$ satisfies  the so-called the Pazy condition \cite[p.57]{Pazy} (see also \eqref{Pazy condition} below), then the semigroup generated by $A^{cl}$ is differentiable. Finally, let us discuss another more general class of perturbations. To that purpose, let $A_m:Z\subset X\to X$ be a differential linear closed operator and $G,M:Z\to U,$ boundary linear operators, where $U$ is a (boundary) Banach space. We consider the  linear operator on $X,$
\begin{align*}\mathcal{A}:=A_m,\quad \quad D(\mathcal{A})=\{f\in Z,\quad  Gf=Mf\}.\end{align*}
We assume that $A:=A_m$ with domain $D(A)=\ker(G)$ is a generator of a strongly continuous semigroup $\T$ on $X$. Observe that the operator $\calA$ is obtained by perturbing the domain $D(A)$ of $A$ by an unbounded perturbation $M$. Based on feedback theory of regular linear systems (\cite{Weiss}, see also the next section for definitions), the authors of
\cite{Hadd Manzo Ghandi} introduced sufficient conditions on $G$ and $M$ for which  $(\calA,D(\calA))$ is a generator of a strongly continuous semigroup $\T^{cl}$ on $X$. In fact, they proved that there exists a space $\tilde{Z}$ such that $D(A)\subsetneq Z\subset \tilde{Z}\subsetneq X$ and an operator $\L\in\calL(\tilde{Z},X_{-1})$ such that the operator $(\calA,D(\calA))$ coincides with the following one
\begin{align*}
A^{cl}:=A_{-1}+\L,\qquad D(A^{cl})=\left\{x\in \tilde{Z}: (A_{-1}+\L)x\in X\right\}.
\end{align*}
In this case, the operator $\L\in\calL(\tilde{Z},X_{-1})$ is called a {\em Staffans--Weiss perturbation} (see Theorem \ref{staffans-weiss perturbation}).
 The main objective of this work is to prove that immediate norm continuity and compactness are preserved under Staffans--Weiss perturbations operators, see Theorem \ref{main thm I N C} and Theorem \ref{main thm I C} below. Particular case of these results is when the operator $M$ is bounded, i.e. $M\in\calL(X),$ so that we are in  Desch-Schappacher perturbations setting.

As source of applications of our abstract results, we will consider regularity of solutions of the following intergo-differential equation
\begin{equation*}
  \left\{
    \begin{array}{ll}
      \dot{x}(t)=A_{m}x(t)+\displaystyle\int_{0}^{t}k(t-s)Px(s)ds, & \hbox{$t\geq 0$} \\
       Gx(t)=Mx(t)  , & \hbox{$t\geq 0$,}\\
      x(0)=x, & \hbox{}
    \end{array}
  \right.
\end{equation*}
where $A_m,G$ and $M$ as above, $P:Z\to X$ is an admissible observation operator for $A,$ and the kernel $k(\cdot)$ belong to an appropriate Bergman space (see Section \ref{Appl-Volterra}).

For the reader's convenience, we briefly recall the relevant background from \cite{Weiss} (and also \cite{Hadd Manzo Ghandi})
and related works and introduce (much of) our notation in Section \ref{Salamon-Weiss-systems}. Section \ref{INC-Staffans-Weiss} is on the study of immediate norm continuity and compactness of semigroups under Staffans-Weiss perturbations operators. In section \ref{eventual-section} we investigate
the eventual differentiability of semigroups under Desch-Schappacher perturbations. The last section is concerned with the study of a class of
integro-differential equations in Banach spaces.

\section{Staffans-Weiss perturbation theorem}\label{Salamon-Weiss-systems}
In this section, we shall recall the recent concept of Staffans-Weiss perturbations. The origin of these  perturbations is the feedback theory of well-posed and regular linear systems introduced mainly by Salamon, Staffans and Weiss, see e.g. \cite{Salamon}, \cite[chap.7]{Staffans} and \cite{Weiss}.

Throughout this section $X$ and $U$ are Banach spaces (with norms denoted (for simplicity) by the same symbol $\|\cdot\|$) and $p>1$ is a real number. Let $Z$ be another Banach space such that $Z\subset X$ (with continuous and dense embedding). We now consider a differential operator $A_m:Z\to X$ and a trace operator $G:Z\to U$ assumed to be surjective. We also assume that the following operator
\begin{align}\label{generator-A}
A=A_m\quad D(A)=\left\{x\in Z: Gx=0\right\}
\end{align}
generates a strongly continuous semigroup $\mathbb{T}:=(T(t))_{t\ge 0}$ on $X$ of type $\om_0(A)$. We denote by $\rho(A)$ the resolvent set of $A,$ $\si(A)=\mathbb{C}\backslash\rho(A)$ the spectrum of $A$, and $R(\la,A):=(\la I-A)^{-1}$ for $\la\in\rho(A),$ the resolvent operator of $A$. The graph norm with respect to $A$ is $\|x\|_A:=\|x\|+\|Ax\|$ for $x\in D(A)$. It is well-known that $X_A:=(D(A),\|\cdot\|_A)$ is a Banach space and $X_A\hookrightarrow X$ (densely and continuously). It is shown by Greiner \cite{Greiner} that the restriction of $G$ to $\ker(\la-A_m)$ for $\la\in\rho(A)$ is invertible. As known, the inverse
\begin{align}\label{Dirichlet-operator}
D_\la:=\left(G_{|\ker(\la-A_m)}\right)^{-1}\in\calL(U,Z)
\end{align}
is called the Dirichlet operator. On the other hand, we define a new norm on $X$ by setting $\|x\|_{-1}:=\|R(\beta,A)x\|$ for $x\in X$ and some (hence all) $\beta\in \rho(A)$. The completion of $X$ with respect to this norm is a Banach space denoted by $X_{-1}$ and satisfies
\begin{align*}
X_A\hookrightarrow X\hookrightarrow X_{-1}.
\end{align*}
The extension of the semigroup $\T$ to $X_{-1}$ is again a strongly continuous semigroup $\mathbb{T}_{-1}:=(T_{-1}(t))_{t\ge 0}$ on $X_{-1}$ whose generator $A_{-1}:D(A_{-1})=X\to X_{-1}$ is the extension of $A$ to $X$. We consider the boundary control problem
\begin{align}\label{BCP}
\begin{cases} \dot{z}(t)=A_m z(t),& t\ge 0,\cr z(0)=x,\cr Gz(t)=u(t),& t\ge 0,
\end{cases}
\end{align}
where $u:[0,+\infty)\to U$ is a control function (boundary control). If one looks for a solution $z\in C([0,\al],X)$ (with $\al>0$) of \eqref{BCP}, it is more convenient to reformulate a such boundary problem as a distributed one. To that purpose, we  introduce the following operator
\begin{align}\label{operator B}
B:=(\la-A_{-1})D_\la\in\calL(U,X_{-1}),\qquad \la\in\rho(A),
\end{align}
(this operator do not depend of $\la$, due to the resolvent equation). One can see that the boundary problem \eqref{BCP} is equivalent to
\begin{align}\label{DCP}
\begin{cases} \dot{z}(t)=A_{-1} z(t)+B u(t),& t\ge 0,\cr z(0)=x.
\end{cases}
\end{align}
The solution of \eqref{DCP} (and hence of \eqref{BCP}) is given by
\begin{align}\label{adm cont}
z(t)&=T(t)x+\int_{0}^{t}T_{-1}(t-s)Bu(s)ds\cr &:=T(t)x+\Phi_{t}u,
\end{align}
for any $t\ge 0,\;x\in X$ and $u\in L^p([0,+\infty),U)$. We have
\begin{align*}
\Phi_t\in\calL(L^p([0,t],U),X_{-1}),\qquad \forall t>0.
\end{align*}
Observe that the solution $z$ takes value in $X_{-1}$. We then have the following definition.
\begin{defn}
  The operator $B\in \mathcal{L}(U,X_{-1})$ is called an admissible control operator for $A$ if there exists $t_{0}>0$ such that ${\rm Range}(\Phi_{t_0})\subset X$. In this case we also say that the pair $(A,B)$ is admissible or sometimes well-posed.
 \end{defn}
If $(A,B)$ is well-posed then we have $\Phi_t\in\calL(L^p([0,t],U),X)$ for any  $t>0$ and the solution of \eqref{BCP} satisfies $z\in C(\R^+,X)$, see \cite{Weiss2}, \cite[chap.4]{Tucsnak Weiss}. The familly $(\Phi_t)_{t\ge 0},$ satisfies for all $t,\t\ge 0,$
\begin{align}\label{Phi-EF}
\Phi_{t+\t}u=T(t)\Phi_\t(u_{|[0,\t]})+\Phi_t u(\cdot+\t)
\end{align}
for any $u\in L^p([0,t+\t],U)$. In addition
\begin{align}\label{phi-comparaison}
\|\Phi_{\t_1}\|\le \|\Phi_{\t_2}\|
\end{align}
for any $0\le \t_1\le \t_2$. Moreover,  for all $\omega > \omega_{0}(A)$, there exists a constant $c>0$ such that
\begin{equation}\label{adm cont weiss}
\|R(\lambda,A_{-1})B\|\leq \frac{c}{({\rm Re} \lambda - \omega)^{\frac{1}{p}}}
\end{equation}
for any $\la\in\C$ such that ${\rm Re} \lambda>\om$.

Let us now consider the following observed boundary system
\begin{align}\label{BOP}
\begin{cases} \dot{z}(t)=A_m z(t),& t\ge 0,\cr z(0)=x,\cr Gz(t)=0,& t\ge 0,\cr y(t)=M z(t),& t\ge 0.
\end{cases}
\end{align}
where $M:D(M)=Z\to U$ is a linear operator and $y:[0,+\infty)\to U$ is an observation function. The problem is how to extend the observation function $y(\cdot;x)$ to a function in $L^p_{loc}(\R^+,U)$ for any initial condition $x\in X$. To that purpose, let $(A,D(A))$ be defined by \eqref{generator-A} and select
\begin{align}\label{Operator C}
C=M\imath\in\calL(X_A,U),
\end{align}
where $\imath:D(A)\to Z$ is the continuous injection. Then the system \eqref{BOP} is reformulated as
\begin{align}\label{DOP}
\begin{cases} \dot{z}(t)=A z(t),& t\ge 0,\cr z(0)=x,\cr y(t)=C z(t),& t\ge 0.
\end{cases}
\end{align}
The mild solution of the evolution equation in \eqref{DOP} is given by $z(t)=T(t)x$ for any $t\ge 0$ and $x\in X$. However the observation $y(t)$ is only defined on the domain $D(A)$ and we have $y(t)=CT(t)x$ for any $t\ge 0$ and initial condition $x\in D(A)$. We need the following definition
\begin{defn}
  $C\in\mathcal{L}(D(A),U)$ is called an admissible observation operator for $A$ (we also say that $(C,A)$ is admissible or well-posed) if
\begin{equation}\label{adm obs}
 \int_{0}^{\tau}\|CT(s)x\|^{p}ds\leq \gamma^{p}(\t)\|x\|^{p}
\end{equation}
for all $x\in D(A)$ and for some constants $\tau>0$ and $\gamma:=\gamma(\tau)>0$.
\end{defn}
To state our main results in next sections, we need to define the concept of zero class admissible observation operators which was first introduced in \cite{XLY}, in order to provide conditions for exact observability of semigroup systems. This concept was further developed in \cite{Jacob Partington Pott}.
\begin{defn}\label{zero-class-admissible}
The operator $C\in\calL(D(A),U)$ is said to belong to the zero class of admissible observation operators
for $A$ ($C$ is zero-class admissible), if the best constant $\ga(\t)$,
given by \eqref{adm obs}, satisfies $\ga(\t)\to 0$ as $\t\to 0$.
\end{defn}
Obviously, bounded observation operators $C\in \calL (X,U)$ are zero-class admissible.\\
Let $(C,A)$ be admissible. Due to \eqref{adm obs} and the density of the domain $D(A)$ in $X,$ the linear operator
\begin{align*}
\Psi_\infty: D(A)\to L^p_{loc}([0,+\infty),U),\quad x\mapsto \Psi_\infty x=CT(\cdot)x
\end{align*}
is extended to a linear bounded operator $\Psi_\infty: X\to L^p_{loc}([0,+\infty),U)$. Then the observation function $y$ can be extended to a $p$-locally integrable function for any initial condition $x\in X$ by setting
\begin{align*}
y(t)=(\Psi_\infty x)(t),\qquad a.e.\;t\ge 0
\end{align*} Next we recall the  representation of $\Psi_\infty$ (then $y$) using an extension operator of $C$. We then need the following concept.
\begin{defn}
  The Yosida extension of an operator $C\in\mathcal{L}(D(A),U)$ with respect to $A$ is the operator defined by
\begin{align*}
  C_{\Lambda}x:= & \lim_{\lambda \rightarrow +\infty}C\lambda R(\lambda,A)x \\
  D(C_{\Lambda}):= & \{x\in X,\lim_{\lambda \rightarrow +\infty}C\lambda R(\lambda,A)x \mbox{ exists in } U \}.
\end{align*}
\end{defn}
Let $\lambda_{0}$ such that $[\lambda_{0},\infty)\subset\rho(A)$. We define the following norm
$$
\|x\|_{D(C_{\Lambda})}:=\|x\|_{X}+\sup_{\lambda\geq \lambda_{0}}\|C\lambda R(\lambda,A)x\|_{U},\qquad x\in D(C_\Lambda).
$$
According to \cite{Weiss3}, $[D(C_{\Lambda})]:=(D(C_{\Lambda}),\|\cdot\|_{D(C_{\Lambda})})$ is a Banach space. Now if  $(C,A)$ is admissible, the representation theorem of Weiss (see \cite{Weiss3}) shows that
  ${\rm Range}(T(t))\subset D(C_{\Lambda})$ for a.e. $t>0$. On the other hand, for any $x\in X$ and a.e. $t\ge 0,$ we have
  \begin{align*}
  \left(\Psi_\infty x\right)(t)=C_\Lambda T(t)x.
  \end{align*}
A necessary condition for the well-posedness of $(C,A)$ is: for all $\omega\in \C $ with $\omega > \omega_{0}(A)$, there exists a constant $b>0$ such that
\begin{equation}\label{adm obs weiss}
\|CR(\lambda,A)\|\leq \frac{b}{({\rm Re} \lambda - \omega)^{1-\frac{1}{p}}},\qquad {\rm Re} \lambda>\om,
\end{equation}
see \cite{Staffans}, \cite{Tucsnak Weiss}, and \cite{Weiss3}, for more details.

Let us now consider the input-output boundary system
\begin{equation}\label{input output system}
  \left\{
  \begin{array}{ll}
    \dot{z}(t)=A_m z(t), & \hbox{$t\geq 0$,} \\
    Gz(t)=u(t), & \hbox{$t\geq 0$,} \\
    y(t)=Mz(t), & \hbox{$t\geq 0$,} \\
    z(0)=x, & \hbox{}
  \end{array}
\right.
\end{equation}
The question now is how to define the well-posedness of the system \eqref{input output system} in the sense that the solution $z(t;x,u)$ should belong to the state space $X$ for any $x\in X,\;u\in L^p([0,+\infty),U)$ and a.e. $t\ge 0,$ continuously depends on $t$ and $u$, and the transformation $u\mapsto y$ define a linear bounded operator on $L^p_{loc}([0,+\infty),U)$. Using operators \eqref{operator B} and \eqref{Operator C}, the system \eqref{input output system} is reformulated as
\begin{align}\label{system ABC}
\left\{
  \begin{array}{ll}
    \dot{z}(t)=Az(t)+Bu(t), & \hbox{$t\geq 0$,} \\
    z(0)=x, & \hbox{} \\
    y(t)=Cz(t), & \hbox{$t\geq 0$.}
  \end{array}
\right.
\end{align}
We start by assuming that  $(A,B)$ and $(C,A)$ are admissible. In this case,  $z(t)=T(t)x+\Phi_t u\in X$ for any $t\ge 0$ and $u\in L^p([0,+\infty),U)$. We now look for additional condition so as to extend the output function $y$ to a function in $L^p_{loc}(\R^+,U)$. For this, define the space
\begin{align*}
W^{2,p}_{0,loc}([0,+\infty),U):=\left\{v\in W^{2,p}_{loc}([0,+\infty),U):v(0)=v'(0)=0\right\},
\end{align*}
which is dense in $L^p_{loc}([0,+\infty),U)$. Without loss generality, we can assume that $0\in \rho(A)$, so $B=(-A_{-1})D_0$ (see \eqref{operator B}). Now an integration by parts yields, for any $u\in W^{2,p}_{0,loc}([0,+\infty),U),$
\begin{align*}
\Phi_t u=D_0 u(t)-\int^t_0 T(t-s)D_0 u'(s)ds\in Z,\qquad t\ge 0.
\end{align*}
This allows us to define the following operator
\begin{align*}
\left(\F_\infty u\right)(t)=M\Phi_t u,\qquad u\in W^{2,p}_{0,loc}([0,+\infty),U),\;a.e.\;t\ge 0.
\end{align*}

\begin{defn}
  The system $(A,B,C)$ is called  well-posed on $X,U,U$ if
  \begin{enumerate}
    \item $(A,B)$ is well-posed on $X,U,$
    \item $(C,A)$ is well-posed on $X,U$, and
    \item there exist constants $\t>0$ and $\kappa_\t >0$ such that
    \begin{align}\label{F-infty-estimate}
    \left\|\F_\infty u\right\|_{L^p([0,\t],U)}\le \kappa_\t\;\left\|u\right\|_{L^p([0,\t],U)},\qquad u\in W^{2,p}_{0,loc}([0,+\infty),U).
    \end{align}
  \end{enumerate}
  \end{defn}
  The first consequence of the well-posedness of the system $(A,B,C)$ is that the operator $\F_\infty$ can be extended to a linear bounded operator on $L^p_{loc}([0,+\infty),U)$, denoted again by $\F_\infty$. On the other hand, observation function of the system \eqref{input output system} is given by
  \begin{align*}
  y=\Psi_{\infty}x+\F_\infty u,\qquad (x,u)\in X\times L^p_{loc}([0,+\infty),U),
\end{align*}
which is a function in $L^p_{loc}([0,+\infty),U)$. Observe that the Laplace transform of $\F_\infty$ is given by
\begin{align}\label{Laplace-F-infty}
\widehat{(\F_\infty u)}(\la)=MR(\la,A_{-1})B \hat{u}(\la)
\end{align}
for $\la\in\rho(A)$ such that these Laplace transforms exist.

Now in order to given a representation of the observation function $y(t)$ is terms of the operator $C$ and the state $z(t)$ of the system \eqref{input output system}, we need the following important subclass of well-posed systems.
\begin{defn}
A well-posed system $(A,B,C)$ is  regular on $X,U,U$ (with feedthrough $D=0$) if
$$
\lim_{t\rightarrow 0^{+}}\frac{1}{t}\int_{0}^{t}(\F_{\infty}u_{0})(\sigma)d\sigma =0
$$
exists in $U$, for the constant control function $u_{0}(t)=v$, $v\in U$, $t\geq 0$.
\end{defn}
If the system $(A,B,C)$ is regular and by using a tauberian theorem we have
\begin{align*}
\lim_{\la>0,\;\la\to +\infty} \la \widehat{(\F_\infty u_0)}(\la)=0.
\end{align*}
According to \eqref{Laplace-F-infty}, if $(A,B,C)$ is regular then
\begin{align}\label{limit-Transfer-function}
\lim_{\la\to+\infty} M R(\la,A_{-1})B v=0,\qquad v\in U.
\end{align}
In addition, if we take $\la>0$ sufficiently large and $\mu\in\rho(A)$, then by using \eqref{limit-Transfer-function}, and the following identity
\begin{align*}
C \la R(\la,A)R(\mu,A_{-1})B v=\frac{\la}{\la-\mu}MR(\mu,A_{-1})Bv-\frac{1}{\la-\mu}M\la R(\la,A_{-1})Bv,\qquad v\in U,
\end{align*}
we have ${\rm Range}(R(\mu,A_{-1})B)\subset D(C_\La)$ and
\begin{align*}
C_\Lambda R(\mu,A_{-1})B v=M R(\mu,A_{-1})B v,\qquad v\in U,\; \la\in\rho(A).
\end{align*}
With these observations and using \cite[lem.3.6]{Hadd Manzo Ghandi}, we have
\begin{align}\label{inclusion-important}
Z\subset D(C_\Lambda)\quad\text{and}\quad (C_\Lambda)_{|Z}=M.
\end{align}
We also mention that (see \cite{Weiss1}) if $(A,B,C)$ is regular then ${\rm Range}(\Phi_t)\subset D(C_\Lambda)$ (and in particular the state of \eqref{input output system} $z(t)\in D(C_\Lambda)$) for a.e. $t\ge 0$. Moreover, $(\F_\infty u)(t)=C_\Lambda \Phi_t u$ for all $u\in L^p_{loc}([0,+\infty),U)$ and a.e. $t\ge 0$. In addition, the observation function of the system \eqref{input output system} is given by
\begin{align*}
y(t)=C_\Lambda z(t),\qquad a.e.\;t>0
\end{align*}
for any initial condition $x\in X$ and any control function $u\in L^p_{loc}([0,+\infty),U)$, see \cite{Weiss1} for more details.

In many cases, it is very important to work not only with the linear operator  $\F_\infty$ but also with its Laplace transform, called the transfer function. So if a system $(A,B,C)$ is regular then its transfer function if given by
\begin{center}
  $H(\lambda):=C_{\Lambda}R(\lambda,A_{-1})B$, $\quad Re \lambda > \omega_{0}(A)$.
\end{center}
According to Weiss \cite{Weiss1}, there exists $\gamma >0$ such that
\begin{align}\label{estimation-transfer-function}
\sup_{Re \lambda > \gamma}\|H(\lambda)\|<+\infty.
\end{align}
In the rest of this section we shall present a perturbation theorem associated with regular linear systems, due to Weiss \cite{Weiss1} in the Hilbert setting and Staffans \cite[chap.7]{Staffans} for the Banach cases. To that purpose we need the following definition.
\begin{defn}
Let $(A,B,C)$ be a regular linear system on $X,U,U$. The identity operator $I_U:U\to U$ is called an admissible feedback  if $I-\F_{\infty}$  has uniformly bounded inverse.
\end{defn}

\begin{thm}\label{staffans-weiss perturbation}
Assume that the system $(A,B,C)$ is regular on $X,U,U$ and $I_U$ is an admissible feedback. Let $C_\Lambda$ be the Yosida extension of $C$ with respect to $A$. Then the operator
\begin{align}\label{Operator-closed}
\begin{split}
  A^{cl}:= & A_{-1}+BC_{\Lambda} \\
  D(A^{cl}):= & \{x\in D(C_{\Lambda}): (A_{-1}+BC_{\Lambda})x\in X\}
  \end{split}
\end{align}
generates a $C_{0}$-semigroup $\T^{cl}:=(T^{cl}(t))_{t\geq0}$ on $X$ such that
\begin{itemize}
  \item [{\rm(i)}] ${\rm Range}(T^{cl}(t))\subset D(C_\Lambda)$ for a.e. $t>0$,
  \item [{\rm(ii)}] for any $x\in X$ and $t\ge 0,$
  \begin{align}\label{VCF-T-cl}
 T^{cl}(t)x=T(t)x+\int_{0}^{t}T_{-1}(t-s)BC_{\Lambda}T^{cl}(s)xds,
\end{align}
  \item [{\rm(iii)}] there exist constants $\al>0$ and $\delta_\al>0$ such that
  \begin{align*}
  \int^\al_0 \left\|C_\Lambda T^{cl}(t)x\right\|^p\le \delta_\al^p \|x\|^p
  \end{align*}
  for any $x\in X$.
\end{itemize}
\end{thm}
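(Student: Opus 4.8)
The plan is to reduce everything to the already-established machinery of feedback for regular linear systems, so that $\T^{cl}$ appears not as something new but as the closed-loop semigroup obtained by applying the admissible feedback $I_U$ to $(A,B,C)$. First I would record the closed-loop input-output map: since $I_U$ is an admissible feedback, $(I-\F_\infty)^{-1}$ is uniformly bounded on $L^p_{\mathrm{loc}}$, and one sets $\F_\infty^{cl}:=(I-\F_\infty)^{-1}\F_\infty$ and the closed-loop input map $\Phi_t^{cl}$ via the standard formula $\Phi_t^{cl}u=\Phi_t(I-\F_\infty)^{-1}u$ (restricted appropriately). The closed-loop semigroup is then defined by $T^{cl}(t)x=T(t)x+\Phi_t^{cl}\Psi_\infty x$ after feeding the output back as input; equivalently $T^{cl}(t)x=T(t)x+\Phi_t(I-\F_\infty)^{-1}\Psi_\infty x$. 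The first real task is to verify that this family is a $C_0$-semigroup: strong continuity follows from continuity of $t\mapsto T(t)$, of $t\mapsto\Phi_t$ into $X$ (admissibility of $(A,B)$), and boundedness of $(I-\F_\infty)^{-1}$ and $\Psi_\infty$; the semigroup (cocycle) identity is checked using the composition properties \eqref{Phi-EF} of $\Phi$ together with the corresponding shift-invariance of $\Psi_\infty$ and $\F_\infty$, exactly as in Weiss's closed-loop construction.

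Next I would identify the generator. The candidate is $A^{cl}=A_{-1}+BC_\Lambda$ with the domain in \eqref{Operator-closed}. The standard route is via Laplace transforms: for $\mathrm{Re}\,\la$ large, $\int_0^\infty e^{-\la t}T^{cl}(t)x\,dt$ can be computed from the definition of $T^{cl}$ and the Laplace transforms $\widehat{\Phi_\cdot u}(\la)=R(\la,A_{-1})B\hat u(\la)$, $\widehat{\Psi_\infty x}(\la)=C_\Lambda R(\la,A)x$ (valid on $X$ since $(C,A)$ is admissible and the system regular), and $\widehat{\F_\infty u}(\la)=MR(\la,A_{-1})B\hat u(\la)=H(\la)\hat u(\la)$ from \eqref{Laplace-F-infty} and the regularity identity $C_\Lambda R(\mu,A_{-1})B=MR(\mu,A_{-1})B$. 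One obtains that the resolvent of $A^{cl}$ is $R(\la,A^{cl})=R(\la,A_{-1})\big(I-BH(\la)(I-H(\la))^{-1}\,?\big)\cdots$ — more precisely the well-known formula $R(\la,A^{cl})=\big(I-R(\la,A_{-1})B(I-H(\la))^{-1}C_\Lambda\big)R(\la,A)$ type expression — and matching this with $\int_0^\infty e^{-\la t}T^{cl}(t)\,dt$ shows $A^{cl}$ generates $\T^{cl}$. This is where I expect the main obstacle: one has to be careful that $R(\la,A_{-1})B$ lands in $D(C_\Lambda)$ (guaranteed by regularity, as noted before \eqref{inclusion-important}), that $I-H(\la)$ is boundedly invertible for $\mathrm{Re}\,\la$ large (this follows from admissibility of the feedback together with the decay $\|H(\la)\|\to 0$ as $\mathrm{Re}\,\la\to\infty$ implicit in regularity and \eqref{estimation-transfer-function}), and that the domain described in \eqref{Operator-closed} is exactly the range of this resolvent; these are the technical heart of the Staffans–Weiss theorem.

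Having identified the generator, properties (i)--(iii) come out of the closed-loop construction almost for free. For (i): $T^{cl}(t)x=T(t)x+\Phi_t^{cl}(\text{something})$, and by the result quoted just before the theorem ($\mathrm{Range}(\Phi_t)\subset D(C_\Lambda)$ for a.e.\ $t$, valid for regular systems) together with $\mathrm{Range}(T(t))\subset D(C_\Lambda)$ a.e., we get $\mathrm{Range}(T^{cl}(t))\subset D(C_\Lambda)$ a.e. For (ii): the variation-of-constants formula \eqref{VCF-T-cl} is just the statement that $\T^{cl}$ solves the fixed-point equation for the closed loop; one derives it by applying $C_\Lambda$ to $T^{cl}(s)x$ (licit by (i)), noting $(\F_\infty^{cl}u)(t)=C_\Lambda\Phi_t^{cl}u$, and unwinding $\Phi_t^{cl}=\Phi_t+\Phi_{t-\cdot}*BC_\Lambda(\cdot)$ — alternatively verify \eqref{VCF-T-cl} directly by Laplace transform, since both sides have the same transform by the resolvent identity derived in the previous step. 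For (iii): the output of the closed-loop system is $y^{cl}(t)=C_\Lambda T^{cl}(t)x=(\Psi_\infty^{cl}x)(t)$ with $\Psi_\infty^{cl}:=(I-\F_\infty)^{-1}\Psi_\infty$, which is bounded $X\to L^p([0,\al],U)$ for every $\al>0$; taking $\al$ fixed and $\delta_\al$ its norm gives the stated estimate, which is precisely the admissibility of the observation operator $C_\Lambda$ for $A^{cl}$. I would present (ii) and (iii) as short corollaries of the closed-loop identities rather than re-deriving them from scratch.
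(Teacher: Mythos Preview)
The paper does not prove this theorem at all: it is stated as a known result, attributed to Weiss \cite{Weiss} in the Hilbert case and Staffans \cite[chap.~7]{Staffans} in Banach spaces, and no argument is given beyond those citations. Your sketch is precisely the Weiss--Staffans closed-loop construction that the paper is invoking, so in that sense you are aligned with (and more explicit than) the paper's ``proof''.

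A few small cautions on your outline. First, the invertibility of $I-H(\la)$ for large $\mathrm{Re}\,\la$ does not follow from \eqref{estimation-transfer-function} alone (that gives only uniform boundedness, not smallness); one really uses the admissibility of the feedback $I_U$, which by definition says $(I-\F_\infty)^{-1}$ is bounded on $L^p_{\mathrm{loc}}$, and then passes to the transform. Second, your resolvent formula is left somewhat vague; the clean version used in this circle of ideas is
\[
R(\la,A^{cl})=R(\la,A)+R(\la,A_{-1})B\,(I-H(\la))^{-1}C R(\la,A),
\]
valid for $\mathrm{Re}\,\la$ large, and matching domains requires the observation (which you mention) that $R(\la,A_{-1})B$ maps into $D(C_\Lambda)$. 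Third, for (ii) the Laplace-transform verification is the safest route; the time-domain ``unwinding'' you allude to needs care because $C_\Lambda T^{cl}(s)x$ is only defined for a.e.\ $s$. None of these are gaps in your strategy, just places where the write-up would need attention.
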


Let us denote by $X^A_{-1} $ (resp. $X^{A^{cl}}_{-1}$) be the extrapolation space associated with $X$ and $A$ (resp. $X$ and $A^{cl}$). Obviously these spaces are  different. In \cite{Weiss}, Weiss constructed subspaces $W_A$ and $W_{A^{cl}}$ of $X^A_{-1} $ and $X^{A^{cl}}_{-1}$, respectively, such that $Jx:=\displaystyle\lim_{\lambda\to\infty}\lambda R(\lambda,A_{-1})x$ in $X^{A^{cl}}_{-1}$ defines an isomorphism $J: W_A\longrightarrow W_{A^{cl}}$. Obviously, $Jx=x$ in $X$. We obtain
\begin{align}\label{isomorphism J}
\int_{0}^{t}T_{-1}(t-s)BC_{\Lambda}T^{cl}(s)xds=\int_{0}^{t}T^{cl}_{-1}(t-s)JBC_{\Lambda}T(s)xds,
\end{align}
see \cite[p:54-55]{Weiss} and \cite[Remark 4.6(b)]{schnaubelt} for more detail. Whence, from \eqref{isomorphism J}, the perturbed semigroup $(T^{cl}(t))_{t\geq 0}$  satisfies also the following variation of constants formula
\begin{equation}\label{convolution R}
 T^{cl}(t)x=T(t)x+\int_{0}^{t}T^{cl}_{-1}(t-s)JBC_{\Lambda}T(s)xds
\end{equation}
for any $t\ge 0$ and $x\in X$.

\begin{rem}\label{DS-MV-perturbations}
Assume that the operator $M$ is linear bounded from $X$ to $U$. In this case, we take $C=M$ is an admissible operator for $A$ and $C_\Lambda=C=M$. In this case, the operator $(A^{cl},D(A^{cl}))$ is exactly the part of the operator $A_{-1}+BM$ in $X$. Moreover if $(A,B)$ is well-posed, then the system $(A,B,M)$ is regular with $I_U$ is an admissible feedback operator. By applying Theorem \ref{staffans-weiss perturbation}, the operator $(A^{cl},D(A^{cl}))$ generates a $C_0$-semigroup $\T^{cl}$ on $X$ such that
 \begin{align}\label{VCF-T-cl-desch}
 T^{cl}(t)x=T(t)x+\int_{0}^{t}T_{-1}(t-s)BMT^{cl}(s)xds,
\end{align}
for any $t\ge 0$ and $x\in X$.
\end{rem}

\begin{rem}\label{unbounded perturbation domain}
Consider the boundary value problem
\begin{align}\label{perturbed boundary systems}
 \begin{cases}
    \dot{z}(t)=A_m z(t), & t\ge 0, \cr
    Gz(t)=Mz(t), & t\ge 0, \cr
    z(0)=x,
  \end{cases}
\end{align}
where $A_m:Z\to X$ and $G,M:Z\to U$ as defined at the beginning of this section. The problem \eqref{perturbed boundary systems} can be viewed as a partial differential equation where the boundary operator $G$ is perturbed by another unbounded trace operator $M$. This system can also  be reformulated as the following Cauchy problem in $X,$
\begin{align}\label{CP-equiv}
\begin{cases}
    \dot{z}(t)=\mathcal{A}z(t), & t\ge 0, \cr
    z(0)=x,
  \end{cases}
\end{align}
where
\begin{align}\label{Hadd-calA}
\calA:=A_m, \quad D(\mathcal{A})=\left\{x\in Z,\quad  Gf=Mf\right\}.
\end{align}
Then the  problem \eqref{perturbed boundary systems} is well-posed if the operator $(\calA,D(\calA))$ is a generator of a $C_0$-semigroup on $X$. Let the operator $A,B$ and $C$ defined by \eqref{generator-A}, \eqref{operator B} and \eqref{Operator C}, respectively. It is shown in \cite{Hadd Manzo Ghandi} that if $(A,B,C)$ is regular and $I_U$ is admissible feedback then $\calA$ coincides with the operator $A^{cl}$ defined by \eqref{Operator-closed}. Now according to Theorem \ref{staffans-weiss perturbation}, the operator $\calA$ generates a $C_0$-semigroup $\calT:=(\calT(t))_{t\ge 0}$ on $X$ given by
\begin{align*}
 \calT(t)x=T(t)x+\int_{0}^{t}T_{-1}(t-s)BC_{\Lambda}\calT(s)xds,
\end{align*}
for all $t\ge 0$ and $x\in X$. On the other hand it is shown in \cite{Hadd Manzo Ghandi} that for any $\lambda\in\rho(A)$ we have
$$
\lambda\in\rho(\calA)\Longleftrightarrow 1\in\rho(D_{\lambda}M) \Longleftrightarrow 1\in\rho(MD_{\lambda}).
$$
Moreover, for $\lambda\in\rho(A)\cap\rho(\mathcal{A})$,
\begin{align}\label{resolvent}
R(\lambda,\mathcal{A})=(I-D_{\lambda}M)^{-1}R(\lambda,A).
\end{align}
\end{rem}

\section{Immediate norm continuity and compactness of perturbed semigroups}\label{INC-Staffans-Weiss}
In this section, we will work under assumptions of Theorem \ref{staffans-weiss perturbation} (and also of Remark \ref{unbounded perturbation domain}). We then suppose that semigroup $\T$ is norm continuity or compact and then show if the perturbed semigroup $\T^{cl}$ inherits such properties.

Let us first  introduce a short proof of a result proved in \cite{Matrai} for Desch-Schappacher perturbations (i.e. in the case when the observation operator $C\in\calL(X,U)$ in Theorem \ref{staffans-weiss perturbation} or the boundary operator $M\in\calL(X,U)$ in Remark \ref{unbounded perturbation domain}). Before doing so, we recall from  \cite{Goersmeyer Weis} the following characterization of immediately norm continuity for strongly continuous semigroups in Banach spaces.

\begin{thm}\label{L.Weis}
  A strongly continuous semigroup $(T(t))_{t\geq 0}$ on a Banach $X$ is continuous in the operator norm for $t>0$ if and only if for all $\tau > 0$ the operator
  \begin{align*}
  K:L^r([0,\tau],X)\longrightarrow L^r([0,\tau],X),\qquad \left(Kf\right)(t)=\int^{t}_{0}T(t-s)f(s)ds
  \end{align*}
   satisfies the following Riesz condition $(R_{r})$ for some (all) $r\in (1,\infty)$ i.e :
  \begin{align*}
 (R_{r})\quad \int_{0}^{\tau}\|(Kf)(t+h)-(Kf)(t)\|^{r}dt\rightarrow 0 \text{ as } h\rightarrow 0
  \end{align*}
  uniformly for $f\in L^{r}([0,\tau],X)$ with $\|f\|_r\leq 1$.
\end{thm}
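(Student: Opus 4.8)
The plan is to prove the equivalence by relating norm continuity of $\T$ to the Riesz condition $(R_r)$ for the convolution operator $K$. The key observation is that $(Kf)(t+h)-(Kf)(t)$ can be split into a "tail" piece $\int_t^{t+h}T(t+h-s)f(s)\,ds$ and a "main" piece $\int_0^t\bigl(T(t+h-s)-T(t-s)\bigr)f(s)\,ds$, and the tail piece goes to $0$ in $L^r$ uniformly on the unit ball of $L^r([0,\tau],X)$ by a straightforward H\"older/Young estimate using only strong continuity and local boundedness of $\T$. So the whole content is concentrated in the main piece.

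For the direction "norm continuous $\Rightarrow (R_r)$": I would fix $\ep>0$ and use norm continuity of $\T$ on compact subintervals $[\eta,\tau]$ (for small $\eta>0$) to get uniform operator-norm continuity there, i.e. $\|T(\sigma+h)-T(\sigma)\|_{\calL(X)}\le\ep$ for all $\sigma\in[\eta,\tau]$ and $|h|$ small. The contribution of $s$ with $t-s\le\eta$ (where $T(t-s)$ may be badly behaved) is controlled in norm by $\int_{t-\eta}^t\|\cdot\|\le M\eta\|f\|_{L^1}$-type bounds, which is small in $L^r$ uniformly on the unit ball once $\eta$ is small; the rest is bounded by $\ep\cdot\|f\|$ via H\"older. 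Summing gives $(R_r)$.

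For the converse "$(R_r)\Rightarrow$ norm continuous": here I would test $K$ against approximate-identity-type inputs. Fix $t_0>0$; applying $(R_r)$ to functions $f$ supported near a point and suitably normalized, one extracts that $\sigma\mapsto T(\sigma)$ is continuous in operator norm at $\sigma=t_0$. Concretely, given $x\in X$ with $\|x\|\le1$ and a small window, let $f_n = n\,\mathbf{1}_{[a,a+1/n]}x$ (so $\|f_n\|_{L^r}\approx n^{1-1/r}$); after the correct normalization $(Kf_n)(t)$ approximates $T(t-a)x$ near $t=a$, and the uniform-in-$f$ Riesz condition forces $\|T(t_0+h)x - T(t_0)x\|\to0$ uniformly in $\|x\|\le 1$, which is exactly operator-norm continuity at $t_0$. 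One must be a little careful that the normalization constant stays bounded away from $0$ and that the "uniformly for $\|f\|_r\le1$" hypothesis is genuinely used to get uniformity in $x$; this is the step I expect to require the most care.

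The main obstacle, as usual in these convolution characterizations, is the converse direction: turning the abstract uniform Riesz condition into pointwise operator-norm continuity of $\T$ without circularity. The cleanest route is probably not the explicit approximate identities above but rather a functional-analytic argument — e.g. noting that $(R_r)$ says the family $\{\tau_h K - K\}$ (shift composed with $K$) tends to $0$ in operator norm on $L^r([0,\tau],X)$ as $h\to0$, and then reading off norm continuity of $\T$ from the structure of $K$ as a convolution operator, i.e. from the fact that $K$ "is" convolution by $\T$. Since this theorem is quoted from \cite{Goersmeyer Weis}, I would in practice cite it and only reproduce the easy tail estimate, but the sketch above is how a self-contained argument would go.
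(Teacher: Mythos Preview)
The paper does not give its own proof of this theorem: it is stated as a recalled result from \cite{Goersmeyer Weis} and then used as a black box in the proof of the next theorem. So there is no proof in the paper to compare your proposal against. Your own closing remark is exactly right --- in the paper's context one simply cites Goersmeyer--Weis, and that is precisely what the authors do.

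That said, a brief comment on the sketch itself: the forward direction (norm continuity $\Rightarrow (R_r)$) is fine as outlined. For the converse, your approximate-identity idea has a normalization issue you flag yourself: with $f_n=n\mathbf{1}_{[a,a+1/n]}x$ one has $\|f_n\|_{L^r}\sim n^{1-1/r}\to\infty$, so these are \emph{not} in the unit ball, and rescaling to the unit ball kills the concentration. The argument in Goersmeyer--Weis does not proceed this way; it goes through a compactness/Riesz-type characterization (the condition $(R_r)$ is designed to be a Kolmogorov--Riesz criterion for the image of the unit ball, hence compactness of $K$ in a suitable sense, from which norm continuity of $\T$ is read off). If you ever need a self-contained proof, that compactness route is the one to follow rather than pointwise approximate identities.
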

We are now in the position to give a new proof  \cite[Theorem 6]{Matrai}.
\begin{thm}
 Let the control system $(A,B)$ be well-posed on $X,U$ and $C\in \mathcal{L}(X,U)$. Assume that the semigroup $\mathbb{T}=(T(t))_{t\geq 0}$ is immediately norm continuous on $X$. Then the semigroup $\mathbb{T}^{cl}=(T^{cl}(t))_{t\geq 0}$ is immediately norm continuous on $X$.
\end{thm}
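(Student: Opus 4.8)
The approach is to apply the characterization of Theorem~\ref{L.Weis}. Since $C\in\calL(X,U)$ is bounded, Remark~\ref{DS-MV-perturbations} puts us in the Desch--Schappacher case, where $C_\Lambda=C$ and $\mathbb{T}^{cl}$ satisfies \eqref{VCF-T-cl-desch},
\[
T^{cl}(t)x=T(t)x+\int_0^t T_{-1}(t-s)B\,CT^{cl}(s)x\,ds .
\]
Fixing $\tau>0$, I would aim to verify the Riesz condition $(R_p)$ (with $r=p$) for $K^{cl}f(t)=\int_0^t T^{cl}(t-s)f(s)\,ds$, knowing it holds for $Kf(t)=\int_0^t T(t-s)f(s)\,ds$ because $\mathbb{T}$ is immediately norm continuous. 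Substituting the formula above into $K^{cl}f$, changing variables ($\sigma=r-s$) and applying Fubini, and writing $\Phi_t u=\int_0^t T_{-1}(t-s)Bu(s)\,ds$ as in \eqref{adm cont}, one obtains the identity
\[
K^{cl}f(t)=Kf(t)+\Phi_t g,\qquad g:=C\,K^{cl}f .
\]
The point of this identity is that, since $\mathbb{T}^{cl}$ is a $C_0$-semigroup and $p>1$, $K^{cl}$ maps $L^p([0,\tau],X)$ boundedly into $C([0,\tau],X)$ (Hölder), so $g$ is continuous with $\|g\|_\infty\le c_\tau\|f\|_{L^p}$ — a uniform $L^\infty$ bound, not merely an $L^p$ bound — while $\Phi_t g\in X$ with $\|\Phi_t\|\le\|\Phi_\tau\|$ for $0\le t\le\tau$ by \eqref{phi-comparaison}. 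Everything then reduces to showing that $t\mapsto\Phi_t g$ satisfies $(R_p)$ uniformly over $g$ in a bounded subset of $C([0,\tau],U)$.

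For this, I would split, using $T_{-1}(t+h-s)=T_{-1}(h)T_{-1}(t-s)$ and $\Phi_t g\in X$ (so that $T_{-1}(h)$ acts on it as $T(h)$),
\[
\Phi_{t+h}g-\Phi_t g=\bigl[T(h)-I\bigr]\Phi_t g+\Phi_h\bigl(g(t+\cdot)\bigr).
\]
The second term is easy: $\|\Phi_h(g(t+\cdot))\|\le\|\Phi_\tau\|h^{1/p}\|g\|_\infty$ by \eqref{phi-comparaison} and Hölder, so its $L^p([0,\tau])$-norm to the $p$-th power is $\lesssim h\to0$, uniformly — this is exactly where the $L^\infty$ bound on $g$ is used. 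The main obstacle is the first term, $[T(h)-I]\Phi_t g$, since one only has strong convergence $T(h)\to I$, not norm convergence. To handle it I would fix $\delta>0$ and split $\Phi_t g$ (cutting its defining integral at $t-\delta$) as $\Phi_t g=T(\delta)v_t+e_t$, with $v_t=\Phi_{t-\delta}(g|_{[0,t-\delta]})$, $\|v_t\|\lesssim\|g\|_\infty$ and $\|e_t\|\lesssim\delta^{1/p}\|g\|_\infty$. Then
\[
[T(h)-I]\Phi_t g=\bigl[T(h+\delta)-T(\delta)\bigr]v_t+[T(h)-I]e_t ,
\]
and here the immediate norm continuity of $\mathbb{T}$ enters decisively: $s\mapsto T(s)$ is norm continuous at $s=\delta>0$, hence $\|T(h+\delta)-T(\delta)\|_{\calL(X)}\to0$ as $h\to0$. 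Choosing $\delta$ small and then $h$ small gives $\|[T(h)-I]\Phi_t g\|<\varepsilon$ uniformly in $t\in[0,\tau]$ and in $\|f\|_{L^p}\le1$.

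Combining the three estimates — $(R_p)$ for $K$ (hypothesis), the tail term, and the $[T(h)-I]\Phi_t g$ term — yields $(R_p)$ for $K^{cl}$, and Theorem~\ref{L.Weis} then gives that $\mathbb{T}^{cl}$ is immediately norm continuous. I expect the delicate step to be the last one: upgrading the pointwise-strong convergence $T(h)\to I$ to a bound uniform over $t$ and over the unit ball of $L^p$. The device of absorbing a fixed power $T(\delta)$, on which $\mathbb{T}$ is norm continuous, at the price of an $O(\delta^{1/p})$ error controlled by $\|g\|_\infty$, is what makes this work; one should also check the minor point of extending $f$ (hence $g$) slightly past $[0,\tau]$ so that the time-translated quantities are well defined.
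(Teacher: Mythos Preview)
Your argument is correct, but it takes a different route from the paper's. Both proofs start from Theorem~\ref{L.Weis} and aim at $(R_p)$ for $K^{cl}$, but the paper uses the \emph{alternate} variation-of-constants formula \eqref{convolution R} (with $T^{cl}_{-1}$ and $JB$) rather than \eqref{VCF-T-cl-desch}. After Fubini this yields
\[
(K^{cl}f)(t)=(Kf)(t)+\Phi^{J}_{t}\bigl(CKf\bigr),
\]
so the auxiliary function is $g=CKf$, expressed in terms of the \emph{known} semigroup, not $g=CK^{cl}f$ as in your approach. The paper then splits via $\Phi^{J}_{t+h}=T^{cl}(t)\Phi^{J}_{h}+\Phi^{J}_{t}(\cdot+h)$, obtaining a small term $T^{cl}(t)\Phi^{J}_{h}(CKf)$ and a main term $\Phi^{J}_{t}C\bigl[(Kf)(\cdot+h)-Kf\bigr]$ that reduces \emph{directly} to the hypothesis $(R_p)$ for $K$; no $\delta$-trick is needed. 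Your choice of $g=CK^{cl}f$ forces the other orientation $\Phi_{t+h}=T(h)\Phi_{t}+\Phi_{h}(\cdot+t)$ (since the alternative would be circular), and hence the genuinely extra work of handling $[T(h)-I]\Phi_{t}g$ by absorbing a factor $T(\delta)$. What your approach buys is that it stays with the ordinary control maps $\Phi_t$ for $(A,B)$ and avoids invoking the isomorphism $J$ and the formula \eqref{convolution R}; what the paper's buys is a shorter, cleaner proof in which the immediate norm continuity of $\T$ enters only through Theorem~\ref{L.Weis}, not through an additional pointwise argument at $t=\delta$.
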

\begin{proof}
Let $0<h<\tau$ and $f\in L^{p}([0,\tau],X)$  with $\|f\|_p\leq 1$. We put
\begin{align*}
(K^{cl}f)(t):=\int_{0}^{t}T^{cl}(t-s)f(s)ds
\end{align*}
We will prove that if $K$ satisfies ($R_p$), then $K^{cl}$ also verifies ($R_p$). In fact, By using \eqref{convolution R}, a change of variables and Fubini theorem,   we obtain
\begin{align}\label{EEEEE}
(K^{cl}f)(t)=(Kf)(t)+\Phi^{J}_{t}C Kf,
\end{align}
where we set
\begin{align*}
\Phi^{J}_{t}v(\cdot)=\int_{0}^{t}T^{cl}_{-1}(t-s)JBv(s)ds,\qquad t\geq 0, v\in L^{p}([0,t],U).
\end{align*}
In view of \eqref{Phi-EF} and \eqref{EEEEE},
\begin{align*}
  (K^{cl}f)(t+h)-(K^{cl}f)(t) & =(Kf)(t+h)-(Kf)(t)+\Phi^{J}_{t+h}C Kf-\Phi^{J}_{t}CKf \\
   & =(Kf)(t+h)-(Kf)(t)+T^{cl}(t)\Phi^{J}_{h}CKf\\
   &\hspace{2.5cm}+\Phi^{J}_{t}C[(Kf)(\cdot+h)-Kf].
\end{align*}
By admissibility of $JB$ for $\mathbb{T}^{cl}$, we obtain
\begin{align*}
  \int_{0}^{\tau}\|(K^{cl}f)(t+h)-(K^{cl}f)(t)\|^{p}dt & \leq c_{p}\int_{0}^{\tau}\|(Kf)(t+h)-(Kf)(t)\|^{p}dt\\
   &+c_{p}\int_{0}^{\tau}\|T^{cl}(t)\Phi^{J}_{h}CKf\|^{p}dt \\
   & +c_{p}\int_{0}^{\tau}\|\Phi^{J}_{t}C[(Kf)(\cdot+h)-Kf]\|^{p}dt.
  \end{align*}
Let $M\ge 1$ and $\tilde{\om}\in\R$ such that $\|T^{cl}(t)\|\le M e^{\tilde{\om} t}$ for any $t\ge 0$ and $p>1$ such that $\frac{1}{p}+\frac{1}{q}=1$. By using \eqref{phi-comparaison},
\begin{align*}
\int_{0}^{\tau}\|T^{cl}(t)\Phi^{J}_{h}C(Kf)(\cdot)\|^{p}dt& \le M \t e^{|\tilde{\om}|\t} \|\Phi^{J}_{h}\|^p \|C Kf\|^{p}_{L^p([0,h],U)} \cr & \le  M \t e^{|\tilde{\om}|\t} \|\Phi^{J}_{h}\|^p \beta h^{\frac{p}{q}} \|f\|_p
\cr & \le  M \t e^{|\tilde{\om}|\t} \|\Phi^{J}_{\t}\|^p \beta h^{\frac{p}{q}} \|f\|_p \underset{h\to 0}{\longrightarrow} 0
\end{align*}
uniformly in $f$ such that $\|f\|_p\le 1$, where $\beta$ is a constant independent of $f$. On the other hand, by using \eqref{phi-comparaison},
\begin{align*}
\int_{0}^{\tau}\|\Phi^{J}_{t}C[(Kf)(\cdot+h)-Kf]\|^{p}dt\le \|\Phi^{J}_{\t}\| \|C\|^p \int_{0}^{\tau}\|(Kf)(t+h)-(Kf)(t)\|^{p}dt,
\end{align*}
which goes to $0$ as $h\to 0$ uniformly in $\|f\|_p\le 1$, due to the norm continuity of $\T$ and Theorem \ref{L.Weis}, respectively. This shows that
\begin{align*}
\int_{0}^{\tau}\|(K^{cl}f)(t+h)-(K^{cl}f)(t)\|^{p}dt \underset{h\to 0}{\longrightarrow} 0
\end{align*}
 uniformly in $\|f\|_p\le 1$. Now according to Theorem  \ref{L.Weis},  $\T^{cl}$ is immediately norm continuous.
\end{proof}

The following result on immediate norm continuity for perturbed semigroups can be considered as a generalization of a result proved in \cite{Matrai}.
\begin{thm}\label{main thm I N C}
Let assumptions of Theorem \ref{staffans-weiss perturbation} be satisfied with $C$ is a zero-class admissible. In addition we suppose that the semigroup $\T=(T(t))_{t\ge 0}$ is immediately norm continuous on $X$. Then the perturbed semigroup $\T^{cl}=(T^{cl}(t))_{t\ge 0}$ is immediately norm continuous on $X$ as well.
\end{thm}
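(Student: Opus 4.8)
The plan is to mimic the proof given for the bounded case (Theorem with $C\in\calL(X,U)$), but to replace the bounded operator $C$ wherever it appears by the Yosida extension $C_\Lambda$ and to control the resulting terms using the admissibility estimate (iii) from Theorem \ref{staffans-weiss perturbation} together with the \emph{zero-class} hypothesis. First I would fix $\tau>0$, take $0<h<\tau$ and $f\in L^p([0,\tau],X)$ with $\|f\|_p\le 1$, and set $(K^{cl}f)(t)=\int_0^t T^{cl}(t-s)f(s)ds$ and $(Kf)(t)=\int_0^t T(t-s)f(s)ds$. Using the variation of constants formula \eqref{convolution R}, a change of variables and Fubini, I would derive the identity
\begin{align*}
(K^{cl}f)(t)=(Kf)(t)+\Phi^J_t\, C_\Lambda Kf,
\end{align*}
where $\Phi^J_t v=\int_0^t T^{cl}_{-1}(t-s)JBv(s)ds$ — here one must first check that $C_\Lambda Kf$ makes sense as an $L^p$ function, which follows because $\mathrm{Range}(T(t))\subset D(C_\Lambda)$ a.e.\ and $(\Psi_\infty x)(t)=C_\Lambda T(t)x$ is the admissible output map, so that $C_\Lambda Kf = \Psi_\infty * f$ in a suitable sense and lies in $L^p_{loc}$ with norm controlled via \eqref{adm obs}. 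Then, exactly as in the bounded-case proof, I would split
\begin{align*}
(K^{cl}f)(t+h)-(K^{cl}f)(t)&=(Kf)(t+h)-(Kf)(t)+T^{cl}(t)\Phi^J_h C_\Lambda Kf\\
&\quad+\Phi^J_t\big(C_\Lambda[(Kf)(\cdot+h)]-C_\Lambda Kf\big),
\end{align*}
using \eqref{Phi-EF}, and estimate the $L^p([0,\tau])$-norm of each of the three pieces.

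The first piece tends to $0$ uniformly in $\|f\|_p\le 1$ by the norm continuity of $\T$ and Theorem \ref{L.Weis}. For the third piece, using \eqref{phi-comparaison} and the admissibility of $JB$ for $\T^{cl}$, I would bound its $L^p$-norm by $\mathrm{const}\cdot\|\Psi_\infty\|\cdot\big(\int_0^\tau\|(Kf)(t+h)-(Kf)(t)\|^p dt\big)^{1/p}$ modulo a time-truncation argument — the point being that $g\mapsto C_\Lambda Kg$ is, by admissibility of $C$, bounded from $L^p$ into $L^p$, and applying it to the shifted-minus-unshifted input $(Kf)(\cdot+h)-Kf$ gives something whose $L^p$ norm is controlled by the $L^p$ norm of that difference, which again goes to $0$ uniformly by Theorem \ref{L.Weis}. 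The crux is the second piece, $\int_0^\tau\|T^{cl}(t)\Phi^J_h C_\Lambda Kf\|^p dt$. As in the bounded case one bounds $\|T^{cl}(t)\|$ by $Me^{|\tilde\omega|\tau}$ and $\|\Phi^J_h\|\le\|\Phi^J_\tau\|$, reducing matters to showing $\|C_\Lambda Kf\|_{L^p([0,h],U)}\to 0$ as $h\to0$, uniformly in $\|f\|_p\le1$. This is where the zero-class hypothesis enters: since $(C_\Lambda Kf)(t)=\int_0^t C_\Lambda T(t-s)f(s)ds=(\Psi_\infty^{(t)} * f)$, one has by Young's inequality and the \emph{zero-class} estimate $\|C_\Lambda T(\cdot)x\|_{L^p([0,h],U)}\le\gamma(h)\|x\|$ with $\gamma(h)\to0$, that $\|C_\Lambda Kf\|_{L^p([0,h],U)}\le \gamma(h)\,\|f\|_{L^1([0,h],X)}\le \gamma(h)\,h^{1/q}\|f\|_p\to0$, uniformly in $\|f\|_p\le1$.

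Therefore all three pieces go to $0$ uniformly, so $K^{cl}$ satisfies the Riesz condition $(R_p)$, and Theorem \ref{L.Weis} yields that $\T^{cl}$ is immediately norm continuous. I expect the main obstacle to be the rigorous justification of the first paragraph's identity and of the mapping properties of $g\mapsto C_\Lambda Kg$: one must argue on the dense subspace $W^{2,p}_{0,loc}$ (or on $D(A)$-valued step functions) where $\Phi_t$ takes values in $Z\subset D(C_\Lambda)$ and $C_\Lambda$ restricted to $Z$ equals the given operator (cf.\ \eqref{inclusion-important}), establish the estimates there with constants independent of the approximation, and then pass to the limit — the zero-class property being exactly what makes the critical constant in the second piece vanish as $h\to0$ rather than merely staying bounded (which is all one gets for a general admissible $C$, and is the reason the naive generalization of the bounded-case argument would otherwise fail).
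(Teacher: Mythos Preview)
Your overall strategy (transplanting the Riesz-condition argument from the bounded-$C$ theorem) is natural, but the third piece does not go through as you claim. After the splitting via \eqref{Phi-EF}, the third term is $\Phi^J_t\big[u(\cdot+h)-u\big]$ with $u=\mathcal{O}f$, where $(\mathcal{O}g)(r):=\int_0^r C_\Lambda T(r-s)g(s)\,ds$. You assert that $u(\cdot+h)-u$ equals the map $g\mapsto C_\Lambda Kg=\mathcal{O}g$ applied to $g=(Kf)(\cdot+h)-Kf$, and then invoke the $L^p\!\to\!L^p$ boundedness of $\mathcal{O}$. But these are two different objects: $\mathcal{O}\big[(Kf)(\cdot+h)-Kf\big](r)=\int_0^r C_\Lambda T(r-s)\big[(Kf)(s+h)-(Kf)(s)\big]\,ds$, whereas $u(r+h)-u(r)=\int_0^{r+h}C_\Lambda T(r+h-s)f(s)\,ds-\int_0^r C_\Lambda T(r-s)f(s)\,ds$. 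The identity you need would amount to a \emph{pointwise} bound $\|C_\Lambda x\|\le\mathrm{const}\,\|x\|$ on the range of $K$, which is exactly what fails when $C$ is unbounded. In other words, you must show that $\mathcal{O}$ itself satisfies a Riesz-type condition uniformly on $\{f:\|f\|_p\le1\}$, and there is no apparent way to deduce this from the Riesz condition for $K$ together with admissibility alone. (Note also that your second piece already tends to $0$ thanks to the factor $h^{1/q}$, \emph{without} zero-class; so the zero-class hypothesis, which the theorem genuinely needs, has not yet been used effectively in your outline.)

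The paper avoids this obstruction by abandoning the $K^{cl}$/Riesz route entirely and arguing directly on $R(t):=T^{cl}(t)-T(t)$. Writing $R(t)x=\int_0^t T^{cl}_{-1}(t-s)JB\,C_\Lambda T(s)x\,ds$ and introducing an auxiliary parameter $\delta$ with $0<|h|<\delta<t$, one splits $R(t+h)-R(t)$ into
\[
I_1=T^{cl}(t-\delta)\big[\Phi^J_{\delta+h}-\Phi^J_{\delta}\big]C_\Lambda T(\cdot)x,\qquad
I_2=\int_\delta^t T^{cl}_{-1}(t-s)JB\,C_\Lambda\big[T(s+h)-T(s)\big]x\,ds.
\]
For $I_2$ the semigroup property gives $C_\Lambda T(s-\delta)\big[T(\delta+h)-T(\delta)\big]x$, so admissibility of $C$ yields the uniform bound $\gamma(t)\|\Phi^J_t\|\,\|T(\delta+h)-T(\delta)\|\to0$ as $h\to0$ by immediate norm continuity of $\mathbb{T}$. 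For $I_1$ one gets the bound $2\tilde{M}e^{|\omega|t}\|\Phi^J_{2\delta}\|\,\gamma(2\delta)\|x\|$, and it is precisely here that the zero-class hypothesis $\gamma(2\delta)\to0$ is invoked to kill the remaining $\delta$-dependent term after letting $h\to0$. This two-parameter ($h$ then $\delta$) argument is the key idea your proposal is missing.
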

\begin{proof}
By assumption, for any $t>0$ we have
\begin{align}\label{T-NC}
\lim_{h\to 0}\|T(t+h)-T(t)\|=0.
\end{align}
Now let us prove that the perturbed semigroup $\T^{cl}$ introduced in Theorem \ref{staffans-weiss perturbation} have also the above property. Due to \eqref{convolution R}, we have
\begin{align}\label{perturbed semigroup}
T^{cl}(t)x=T(t)x+R(t)x
\end{align}
for all $t\ge 0$ and $x\in X,$ where
\begin{align*}
R(t)x=\int_{0}^{t}T^{cl}_{-1}(t-s)JBC_{\Lambda}T(s)xds.
\end{align*}
According to \eqref{T-NC}, it suffices to prove the map $t\in (0,\infty)\mapsto R(t)$ is norm continuous. In fact, fix $t_0>0$ and  choose arbitrary $h,\delta\in\mathbb{R}$  such that $0<|h|<\delta<t$. We then have
\begin{align}\label{R-R}
\begin{split}
R(t+h)x-&R(t)x=\int_{0}^{\delta+h}T^{cl}_{-1}(t+h-s)JBC_{\Lambda}T(s)xds
+\int_{\delta+h}^{t+h}T^{cl}_{-1}(t+h-s)JBC_{\Lambda}T(s)xds\\
& -\int_{0}^{\delta}T^{cl}_{-1}(t-s)JBC_{\Lambda}T(s)xds-\int_{\delta}^{t}T^{cl}_{-1}(t-s)JBC_{\Lambda}T(s)xds\\
& =T^{cl}(t-\delta)\left[\int_{0}^{\delta+h}T^{cl}_{-1}(\delta+h-s)JBC_{\Lambda}T(s)xds - \int_{0}^{\delta}T^{cl}_{-1}(\delta-s)JBC_{\Lambda}T(s)xds \right]\\
&+\int_{\delta}^{t}T^{cl}_{-1}(t-s)JBC_{\Lambda}\left[T(s+h)x-T(s)x\right]ds\\
&:=I_{1}(x,h,t)+I_{2}(x,h,t),
\end{split}
\end{align}
where
\begin{align*}
I_{1}(x,h,t)&:=T^{cl}(t-\delta)\left[\Phi^J_{\delta+h}C_\Lambda T(\cdot)x-\Phi^J_{\delta}C_\Lambda T(\cdot)x \right],\\
I_{2}(x,h,t)&:=\int_{\delta}^{t}T^{cl}_{-1}(t-s)JBC_{\Lambda}\left[T(s+h)x-T(s)x\right]ds
\end{align*}
and
\begin{align*}
\Phi^J_\tau v=\int^\tau_0 T^{cl}_{-1}(\tau-s)JB v(s)ds,\qquad\tau\ge 0,\; v\in L^p([0,+\infty),U).
\end{align*}
By admissibility of $JB$ for $T^{cl}$, we have
\begin{align*}
\|\Phi^J_\tau v\|\le \|\Phi^J_\tau \|\|v\|_{L^p([0,\t],U)}
\end{align*}
for any $\t\ge 0$ and $v\in L^p([0,+\infty),U)$. Let $\tilde{M}\ge 1$ and $\om\in\R$ such that  $\|T^{cl}(t)\|\leq \tilde{M} e^{\omega t}$ for any $t\ge 0$. We estimate
\begin{align}\label{Tata}
\|I_{1}(x,h,t)\|&=\left\|T^{cl}(t-\delta)\left(\Phi^J_{\delta+h}C_\Lambda T(\cdot)x-\Phi^J_{\delta}C_\Lambda T(\cdot)x\right)\right\|\cr & \leq \tilde{M} e^{|\omega| (t-\delta)} \left(\|\Phi^J_{\delta+h}\|\|C_\Lambda T(\cdot)x\|_{L^p([0,\delta+h],U)}+ \|\Phi^J_{\delta}\|\|C_\Lambda T(\cdot)x\|_{L^p([0,\delta],U)}\right)\cr & \leq 2 \tilde{M} e^{|\omega| t} \|\Phi^J_{2\delta}\|\|C_\Lambda T(\cdot)x\|_{L^p([0,2\delta],U)}\cr & \le
2 \tilde{M} \ga(2\delta) e^{|\omega| t} \|\Phi^J_{2\delta}\|  \|x\|,
\end{align}
where $\ga(2\delta)\to 0$ as $\delta\to 0$, by \eqref{phi-comparaison} and  admissibility of $C$ for $T$. We then have
\begin{align}\label{I-1-estimate}
\|I_{1}(x,h,t)\|\le \varpi_t(\delta) \|x\|
\end{align}
for any $x\in X$, where $\varpi_t(\delta):=2 \tilde{M} \ga(2\delta) e^{|\omega| t} \|\Phi^J_{2\delta}\| \to 0$ as $\delta\to 0$. By admissibility of $JB$ for $\T^{cl}$, a change of variables and admissibility of $C$ for $\T$, we obtain
\begin{align}\label{I-2-estimate}
\begin{split}
\|I_{2}(x,h,t)\|&
=\left\|\int_{0}^{t}T^{cl}_{-1}(t-s)JBC_{\Lambda}\left[T(s+h)x-T(s)x\right]\mathbb{I}_{[\delta,t]}(s)ds\right\| \\
 & \leq \|\Phi^J_t\|\left(\int_{\delta}^{t}\|C_{\Lambda}T(s-\delta)\left[T(\delta+h)x-T(\delta)x\right]\|^p ds\right)^{1/p}\\
 & \leq \|\Phi^J_t\|\left(\int_{0}^{t-\delta}\|C_{\Lambda}T(s)[T(\delta+h)x-T(\delta)x]\|^p \right)^{1/p}ds\\
 & \leq \|\Phi^J_t\|\left(\int_{0}^{t}\|C_{\Lambda}T(s)[T(\delta+h)x-T(\delta)x]\|^p ds\right)^{1/p}\\
 &  \leq \gamma(t) \|\Phi^J_t\|\|T(\delta+h)x-T(\delta)x\|.
\end{split}
\end{align}
Combining \eqref{R-R}, \eqref{I-1-estimate} and \eqref{I-2-estimate}, we have
\begin{align*}
\|R(t+h)-R(t)\|\leq \varpi_t(\delta)+\gamma(t)\|\Phi^J_t\|\|T(\delta+h)-T(\delta)\|.
\end{align*}
The fact that the semigroup $\T$ is immediately  norm continuous implies that
\begin{align*}
\lim_{h\to 0}\|R(t+h)-R(t)\|\le \varpi_t(\delta).
\end{align*}
By letting $\delta\to 0,$ we obtain
\begin{align*}
\lim_{h\to 0}\|R(t+h)-R(t)\|=0.
\end{align*}
This ends the proof.
\end{proof}

The next result is about immediate compactness of perturbed semigroups.
\begin{thm}\label{main thm I C}
Let assumptions of Theorem \ref{staffans-weiss perturbation} be satisfied with $C$ is a zero-class admissible. In addition we suppose that the semigroup $\T=(T(t))_{t\ge 0}$ is immediately compact on $X$. Then perturbed semigroup $\T^{cl}=(T^{cl}(t))_{t\ge 0}$ is immediately compact on $X$ as well.
\end{thm}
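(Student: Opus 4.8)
The plan is to mirror the structure of the proof of Theorem~\ref{main thm I N C}, replacing the norm-continuity criterion of Theorem~\ref{L.Weis} by the compactness of the operators $T(t)$ for $t>0$. First I would use the variation of constants formula \eqref{convolution R} to write $T^{cl}(t)=T(t)+R(t)$, where $R(t)x=\int_{0}^{t}T^{cl}_{-1}(t-s)JBC_{\Lambda}T(s)x\,ds=\Phi^J_t\bigl(C_\Lambda T(\cdot)x\bigr)$. Since $T(t)$ is compact for every $t>0$ by hypothesis, it suffices to prove that $R(t):X\to X$ is compact for every $t>0$.

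To prove compactness of $R(t)$, fix $t>0$ and $\delta\in(0,t)$, and split as in \eqref{R-R}:
\begin{align*}
R(t)x=T^{cl}(t-\delta)\Phi^J_{\delta}\bigl(C_\Lambda T(\cdot)x\bigr)+\int_{\delta}^{t}T^{cl}_{-1}(t-s)JBC_{\Lambda}T(s)x\,ds=:R_\delta(x)+S_\delta(x).
\end{align*}
For the first term, admissibility of $JB$ for $\T^{cl}$ together with zero-class admissibility of $C$ for $\T$ (via the estimate in \eqref{Tata}) gives $\|R_\delta(x)\|\le \tilde M e^{|\omega| t}\ga(\delta)\|\Phi^J_\delta\|\,\|x\|$, so $\|R_\delta\|\to 0$ as $\delta\to 0$ uniformly, and hence $R(t)$ is a norm-limit of the operators $S_\delta$; it is therefore enough to show each $S_\delta$ is compact. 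For $S_\delta$, I would rewrite it, using the substitution $s\mapsto s+\delta$ and the semigroup law $C_\Lambda T(s+\delta)x=C_\Lambda T(s)\bigl(T(\delta)x\bigr)$ valid for $s>0$, as
\begin{align*}
S_\delta(x)=\int_{0}^{t-\delta}T^{cl}_{-1}(t-\delta-s)JBC_{\Lambda}T(s)\bigl(T(\delta)x\bigr)\,ds=\Phi^J_{t-\delta}\Bigl(C_\Lambda T(\cdot)\,T(\delta)x\Bigr).
\end{align*}
Thus $S_\delta=\Phi^J_{t-\delta}\circ \Psi_{t-\delta}\circ T(\delta)$, where $\Psi_{t-\delta}:X\to L^p([0,t-\delta],U)$, $x\mapsto C_\Lambda T(\cdot)x$ is the (bounded) observation map and $\Phi^J_{t-\delta}:L^p([0,t-\delta],U)\to X$ is bounded by admissibility of $JB$ for $\T^{cl}$. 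Since $T(\delta)$ is compact, $S_\delta$ is a composition of a bounded operator, a bounded operator and a compact operator, hence compact.

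Consequently $R(t)$ is compact for every $t>0$ as a uniform limit of the compact operators $S_\delta$ (as $\delta\to0$), and therefore $T^{cl}(t)=T(t)+R(t)$ is compact for every $t>0$, i.e.\ $\T^{cl}$ is immediately compact. The main obstacle, as in Theorem~\ref{main thm I N C}, is handling the singularity of the integrand near $s=0$ (where $C_\Lambda T(s)x$ need only be $p$-integrable, not bounded): this is precisely why the splitting at $\delta$ is needed and why the \emph{zero-class} admissibility of $C$ is essential — it forces the boundary layer term $R_\delta$ to vanish uniformly as $\delta\to0$. One should also check that the identity $C_\Lambda T(s+\delta)x=C_\Lambda T(s)T(\delta)x$ holds for a.e.\ $s$ and all $x\in X$ (it follows from ${\rm Range}(T(t))\subset D(C_\Lambda)$ for a.e.\ $t>0$ and the semigroup property), and that the map $x\mapsto C_\Lambda T(\cdot)x$ indeed lands in $L^p$ with norm controlled by admissibility; these are routine given the background recalled in Section~\ref{Salamon-Weiss-systems}.
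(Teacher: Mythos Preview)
Your proof is correct and follows essentially the same approach as the paper: the same decomposition $T^{cl}(t)=T(t)+R(t)$ via \eqref{convolution R}, the same splitting of $R(t)$ at a level $\delta$ (the paper writes $\epsilon$), and the same use of zero-class admissibility to kill the boundary-layer piece in operator norm. The only cosmetic difference is that you establish compactness of $S_\delta$ by the clean factorization $S_\delta=\Phi^J_{t-\delta}\circ\Psi_{t-\delta}\circ T(\delta)$, whereas the paper carries out the equivalent sequence argument explicitly (extracting a subsequence via compactness of $T(\epsilon)$ and checking convergence of $R_\epsilon(t)x_{\varphi(n)}$); your formulation is slightly more economical but the underlying idea is identical.
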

\begin{proof}
According to \eqref{perturbed semigroup} and the immediate compactness of the semigroup $\T,$ it suffices to prove that the operators $R(t)$ are compact for any $t>0$.  For this, we shall use an approximation argument. Take $\epsilon>0$ and define
\begin{align*}
 R_{\epsilon}(t)x=\int_{\epsilon}^{t}T^{cl}_{-1}(t-s)JBC_{\Lambda}T(s)xds,\qquad t>\varepsilon.
\end{align*}
We now show that $R_{\epsilon}(t)$ approaches to $R(t)$ uniformly as $\epsilon\to 0$. By admissibility of $JB$ for $\T^{cl}$ and $C$ for $\T$, we obtain
\begin{align}\label{Comp-esti}
  \|R(t)x-R_{\epsilon}(t)x\| & =\left\|T^{cl}(t-\epsilon)\int_{0}^{\epsilon}T^{cl}_{-1}(\epsilon-s)JBC_{\Lambda}T(s)xds\right\|\\
  & \leq \|T^{cl}(t-\epsilon)\|\|\Phi^J_{\epsilon}\|\left[\int_{0}^{\epsilon}\|C_{\Lambda}T(s)x\|^pds\right]^{\frac{1}{p}}\\
  & \leq \gamma(\epsilon) \|T^{cl}(t-\epsilon)\|\|\Phi^J_{\epsilon}\|\|x\|.
\end{align}
This shows that $\|R(t)-R_{\epsilon}(t)\|\longrightarrow 0$ as $\epsilon \to 0$. Thus it suffices to show that  $R_{\epsilon}(t)$ is compact for $t>\epsilon$. Let us consider a sequence $(x_n)\subset X$ with $\|x_n\|\leq 1$. Since $T(\epsilon)$ is compact, then there exists a subsequence $x_{\varphi(n)}$ such that
\begin{align*}
  T(\epsilon)x_{\varphi(n)}\longrightarrow y\in X \quad as \quad n\to\infty.
\end{align*}
Hence
\begin{align*}
 R_{\epsilon}(t)x_{\varphi(n)}&=\int_{\epsilon}^{t}T^{cl}_{-1}(t-s)JBC_{\Lambda}T(s-\epsilon)T(\epsilon)x_{\varphi(n)}ds\\
 &=\int_{\epsilon}^{t}T^{cl}_{-1}(t-s)JBC_{\Lambda}T(s-\epsilon)\left[T(\epsilon)x_{\varphi(n)}-y\right]ds\\
 &\quad \quad +\int_{0}^{t-\epsilon}T^{cl}_{-1}(t-\epsilon-s)JBC_{\Lambda}T(s)yds\\
 &:=K_{\epsilon,t}(T(\epsilon)x_{\varphi(n)}-y)+R(t-\epsilon)y.
\end{align*}
Where
\begin{equation*}
  K_{\epsilon,t}x:=\int_{\epsilon}^{t}T^{cl}_{-1}(t-s)JBC_{\Lambda}T(s-\epsilon)xds.
\end{equation*}
On the other hand,
\begin{align*}
  \|K_{\epsilon,t}(T(\epsilon)x_{\varphi(n)}-y)\| & =\left\|\int_{0}^{t}T^{cl}_{-1}(t-s)JBC_{\Lambda}T(s-\epsilon)
  \left[T(\epsilon)x_{\varphi(n)}-y\right]\mathbb{I}_{[\epsilon,t]}(s)ds \right\| \\
   & \leq \|\Phi^J_t\|\left[\int_{\epsilon}^{t}\|C_{\Lambda}T(s-\epsilon)\left[T(\epsilon)x_{\varphi(n)}-y\right]\|^pds\right]^{\frac{1}{p}} \\
   & \leq \|\Phi^J_t\|\left[\int_{0}^{t}\|C_{\Lambda}T(s)\left[T(\epsilon)x_{\varphi(n)}-y\right]\|^pds\right]^{\frac{1}{p}}\\
   & \leq \gamma(t) \|\Phi^J_t\|\|T(\epsilon)x_{\varphi(n)}-y\|\longrightarrow 0 \quad as \quad n\rightarrow\infty.
\end{align*}
Whence, $R_{\epsilon}(t)x_{\varphi(n)}$ converge to $R(t-\epsilon)y\in X$ which means that $R_{\epsilon}(t)$ are compact for $t>\epsilon$. This ends the proof.
\end{proof}
\begin{rem}\label{NC-Miyadera} As  consequences of  Theorem \ref{main thm I N C} and Theorem \ref{main thm I C}, we have
\begin{enumerate}
  \item If $(A,B)$ is well-posed (with $B\in \calL(U,X_{-1})$) and $C$ is bounded (i.e. $C\in\calL(X,U)$), then assumptions of Theorem \ref{staffans-weiss perturbation} are satisfied and $C $ is a zero class observation operator. Then  $\T^{cl}$ is immediately norm continuous (resp. compact) whenever the semigroup $\T$ is.
  \item If $(A,C)$ is well-posed (with $C\in\calL(D(A),U)$) and $B$ is bounded (i.e. $B\in \calL(U,X)$), then assumptions of Theorem \ref{staffans-weiss perturbation} are satisfied. Let $\Phi^J_\t$ as in the proof of Theorem \ref{main thm I N C}. Using the boundedness of $B$ and H\"{o}lder inequality, we obtain
      \begin{align*}
      \left\|\Phi^J_\t\right\|\le \|B\| \t^{\frac{1}{q}}
      \end{align*}
      with $\frac{1}{p}+\frac{1}{q}=1$. Then by \eqref{Tata} we have $\|I_1(x,h,t)\|\to 0$ as $h\to 0$ uniformly in $x$. Then without assuming zero class property for $C,$ we obtain that $\T^{cl}$ is immediately norm continuous (resp. compact) whenever the semigroup $\T$ is.
\end{enumerate}
\end{rem}
The following theorem is a generalisation of a result  proved  in \cite{LiGuHu} in  the case of Miyadera-Voigt perturbations.
\begin{thm}\label{boulouz}
  Let assumptions of Theorem \ref{staffans-weiss perturbation} be satisfied with $C$ is a zero-class admissible. Then $R(t):=T^{cl}(t)-T(t)$ is compact for $t>0$ if and only if $R(t)$ is norm continuous for $t\geq 0$ and $R(\lambda,A^{cl})-R(\lambda,A)$ is compact for $\lambda\in\rho(A^{cl})\cap \rho(A)$.
\end{thm}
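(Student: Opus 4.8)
This is the analogue, for the operator family $R(t)=T^{cl}(t)-T(t)$, of the classical characterisation of immediate compactness of a $C_{0}$-semigroup (immediate compactness $\Longleftrightarrow$ immediate norm continuity together with compactness of the resolvent), and I would prove the two implications of the equivalence separately. Both arguments rest on the variation of constants formula $R(t)x=\int_{0}^{t}T^{cl}_{-1}(t-s)JBC_{\Lambda}T(s)x\,ds$ from \eqref{convolution R}, on the admissibility of $JB$ for $\T^{cl}$ and of $C$ for $\T$, and on the elementary fact that the semigroup law lets one factor $T^{cl}(\cdot)$ out of the left and $T(\cdot)$ out of the right of such an integral. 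It is also worth recording, before splitting into cases, that under the standing hypotheses alone zero--class admissibility of $C$ yields $\|R(t)\|\le\|\Phi^{J}_{\tau}\|\gamma(t)\to0$ as $t\to0^{+}$ (cf.\ \eqref{Comp-esti}); hence $R(\cdot)$ is automatically norm continuous at $0$, and all the work in the ``only if'' part concerns $t>0$.

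For the implication $(\Rightarrow)$, assume $R(t)$ is compact for every $t>0$. The resolvent statement is the easy half: for ${\rm Re}\,\mu$ large, $Q_{\mu}:=R(\mu,A^{cl})-R(\mu,A)=\int_{0}^{\infty}e^{-\mu s}R(s)\,ds$ is a norm--convergent Bochner integral of compact operators ($R(\cdot)$ being exponentially bounded), hence compact, and for any $\lambda\in\rho(A)\cap\rho(A^{cl})$ the resolvent identities give the exact formula $Q_{\lambda}=\bigl(I+(\mu-\lambda)R(\lambda,A^{cl})\bigr)Q_{\mu}\bigl(I+(\mu-\lambda)R(\lambda,A)\bigr)$, which is again compact. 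The norm continuity of $R(\cdot)$ on $(0,\infty)$ is the substantial point. Here I would fix $t_{0}>0$ and, for small $\epsilon>0$ with $t_{0}/\epsilon\notin\Z$, set $N=\lfloor t/\epsilon\rfloor$; splitting the variation of constants integral over $[0,\epsilon]\cup[\epsilon,N\epsilon]\cup[N\epsilon,t]$ and simplifying each block via the semigroup law and obvious changes of variable gives, for all $t$ in a neighbourhood of $t_{0}$ (on which $N$ is the constant $N_{0}$),
\[
R(t)=T^{cl}(t-\epsilon)R(\epsilon)+T^{cl}(t-N_{0}\epsilon)\bigl[R((N_{0}-1)\epsilon)T(\epsilon)\bigr]+R(t-N_{0}\epsilon)T(N_{0}\epsilon).
\]
The middle term is the map $t\mapsto T^{cl}(t-N_{0}\epsilon)K$ for the \emph{fixed} compact operator $K=R((N_{0}-1)\epsilon)T(\epsilon)$, so it is norm continuous in $t$ (strong continuity of $\T^{cl}$ is uniform on the relatively compact set $\overline{K(B_{X})}$); the first and third terms have norm $\le C(t_{0})\,\overline{\gamma}(\epsilon)$ with $\overline{\gamma}(\epsilon):=\sup_{0\le s\le\epsilon}\gamma(s)\to0$, by the preliminary observation applied to $R(\epsilon)$ and $R(t-N_{0}\epsilon)$. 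Consequently $\limsup_{t'\to t_{0}}\|R(t_{0})-R(t')\|\le C(t_{0})\overline{\gamma}(\epsilon)$ for every admissible $\epsilon$, and letting $\epsilon\to0$ gives norm continuity of $R$ at $t_{0}$.

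For the converse $(\Leftarrow)$, assume $R(\cdot)$ is norm continuous on $[0,\infty)$ and $Q_{\lambda}=R(\lambda,A^{cl})-R(\lambda,A)$ is compact for $\lambda\in\rho(A)\cap\rho(A^{cl})$. The plan is a real (Post--Widder) inversion. For ${\rm Re}\,\lambda$ large $Q_{\lambda}=\int_{0}^{\infty}e^{-\lambda s}R(s)\,ds$, and differentiating $n$ times under the integral sign gives
\[
\int_{0}^{\infty}s^{n}e^{-\lambda s}R(s)\,ds=(-1)^{n}Q_{\lambda}^{(n)}=n!\,\bigl[R(\lambda,A^{cl})^{n+1}-R(\lambda,A)^{n+1}\bigr],
\]
which is compact, since $a^{n+1}-b^{n+1}=\sum_{k=0}^{n}a^{k}(a-b)b^{n-k}$ displays it as a finite sum of operators each carrying the compact factor $Q_{\lambda}=a-b$. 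Taking $\lambda=n/t$ for fixed $t>0$, the compact operators $(n/t)^{n+1}\bigl[R(n/t,A^{cl})^{n+1}-R(n/t,A)^{n+1}\bigr]=\int_{0}^{\infty}k_{n}(s)R(s)\,ds$, with $k_{n}(s)=\frac{(n/t)^{n+1}}{n!}s^{n}e^{-ns/t}$ the $\Gamma(n+1,n/t)$ density (mean $(n+1)t/n\to t$, vanishing relative variance), converge to $R(t)$ in operator norm because $(k_{n})$ is an approximate identity concentrating at $t$ and $R$ is norm continuous and exponentially bounded. Hence $R(t)$ is a norm limit of compact operators, so it is compact.

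The only genuinely delicate point is the norm--continuity half of $(\Rightarrow)$: unlike for a bare semigroup, the variation of constants form of $R(t)$ conceals a ``moving compact operator'' (e.g.\ the term $R(t-\epsilon)T(\epsilon)$), and the discretisation above --- cutting at multiples of $\epsilon$ and regrouping so that each surviving block carries a fixed compact operator on the right of the strongly continuous family $T^{cl}(\cdot)$ --- is exactly what is needed to tame it, at the price of the limit $\epsilon\to0$, for which the zero--class decay $\|R(\epsilon)\|\to0$ is indispensable.
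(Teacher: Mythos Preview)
Your argument is correct, but the norm--continuity half of $(\Rightarrow)$ is considerably more elaborate than the paper's. The paper does not split the convolution integral or discretise at all; it simply uses the two--term cocycle identity
\[
R(t+h)-R(t)=(T^{cl}(h)-I)R(t)+R(h)T(t),\qquad h\ge 0,
\]
and observes that $(T^{cl}(h)-I)R(t)\to 0$ in norm because $R(t)$ is a \emph{fixed} compact operator (strong continuity of $\T^{cl}$ is uniform on the relatively compact set $\overline{R(t)B_X}$), while $\|R(h)\|\le\gamma(h)\|\Phi^{J}_{h}\|\to 0$ by zero--class admissibility --- the very estimate you record at the outset. For the converse $(\Leftarrow)$ the paper just cites Li--Gu--Huang; your Post--Widder inversion is a self--contained implementation of the same idea, and your telescoping identity $a^{n+1}-b^{n+1}=\sum a^{k}(a-b)b^{n-k}$ is exactly the mechanism that makes each approximant compact.

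What your three--term decomposition buys is a symmetric treatment of left and right limits: the paper's identity, read literally, only gives right continuity at each $t>0$ (for left continuity one must cope with $(T^{cl}(h)-I)R(t-h)$, where the compact factor moves with $h$), whereas in your formula the compact block $R((N_{0}-1)\epsilon)T(\epsilon)$ is genuinely fixed. So your longer route is not wasted effort, but your closing remark that the discretisation ``is exactly what is needed'' overstates the case --- the paper's two--term identity already captures the essential mechanism. One small ordering issue in your write--up: calling $\int_{0}^{\infty}e^{-\mu s}R(s)\,ds$ a ``norm--convergent Bochner integral of compact operators'' tacitly uses norm measurability of $s\mapsto R(s)$ in $\mathcal L(X)$, which is precisely the norm continuity you prove afterwards; it is cleaner to establish the norm continuity first and then deduce compactness of $Q_{\mu}$ as a norm limit of Riemann sums, which is also the order implicit in the Pazy result the paper invokes.
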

\begin{proof}
  The sufficient condition  can be obtained by the same arguments as in \cite{LiGuHu}.  Let us now prove the necessary conditions. The  compactness of $R(\lambda,A^{cl})-R(\lambda,A)$ is obtained   by taking  Laplace transform of compact operators $R(t)$ and using the result \cite[theorem 3.3]{Pazy}. On the other hand for $t>0$ and $h$ near of zero, we have
  \begin{align*}
    R(t+h)-R(t) & =T^{cl}(t+h)-T(t+h)-R(t) \\
     & =T^{cl}(h)(T^{cl}(t)-T(t))+(T^{cl}(h)-T(h))T(t)-R(t)\\
     & =(T^{cl}(h)-I)R(t)+R(h)T(t).
  \end{align*}
 Now the compactness of  $R(t)$ implies that
  $$\|(T^{cl}(h)-I)R(t)\|\longrightarrow 0 \quad \mbox{ as } h\to 0.$$
  Moreover, using admissibility of $JB$ for $\T^{cl}$ and admissibility of $C$ for semigroup $\T$, we obtain
  \begin{align*}
    \|R(h)x\| & =\left\|\displaystyle\int_{0}^{h}T^{cl}_{-1}(h-s)JBC_{\Lambda}T(s)xds\right\| \\
     &\leq  \|\Phi_h^J\|\left(\displaystyle\int_{0}^{h}\|C_{\Lambda}T(t)x\|^pds\right)^{1/p}\\
     &\leq \gamma(h) \|\Phi_h^J\|\|x\|
  \end{align*}
  converges to 0 as $h\to 0$ for every $x\in X$. Hence,
  \begin{align*}
    \|R(h)\|\longrightarrow 0 \quad \mbox{ as } h\to 0.
  \end{align*}
  This ends the proof.
\end{proof}
\begin{rem}
In Theorems \ref{main thm I N C},\ref{main thm I C} and  \ref{boulouz}, we can replace the condition $C$ is zero class observation operator by a similar dual concept on admissible control operator $B$. In fact, let $B\in\calL(U,X_{-1})$ be an admissible control operator for $A$ with control maps $\Phi_t,\;t\ge 0$. For any $\t>0,$ there exists $c(\t)>0$ such that
\begin{align}\label{Estim-phi}
\left\|\Phi_\t u\right\|\le c(\t) \|u\|_p
\end{align}
for all $u\in L^p(\R^+,U)$. Now we say that $B$ is a zero class control operator if the constant $c(\t)\to 0$ as $\t\to 0$. This notion is used in \cite{JNPS} to study input-to-state stability for the infinite-dimensional systems. Let assumptions of Theorem \ref{staffans-weiss perturbation} be satisfied. From \eqref{isomorphism J}, we have
\begin{align*}
\Phi^J_\t C_\Lambda T(\cdot)x= \Phi_\t  C_\Lambda T^{cl}(\cdot)x
\end{align*}
for any $\t\ge 0$ and $x\in X$. According to \eqref{Estim-phi} and the admissibility of $C_\La$ for $\T^{cl},$ there exists a constant $\tilde{\ga}>0$ such that
\begin{align}\label{llll}
\left\|\Phi^J_\t C_\Lambda T(\cdot)x\right\|\le c(\t)\tilde{\ga} \|x\|.
\end{align}
Thus if in  Theorems \ref{main thm I N C},\ref{main thm I C} and  \ref{boulouz}  instead of $C$ is a zero class observation operator  we assume that  $B$ is a zero class control operator, then we obtain the same results. In fact, in the proof of these theorems we replace the fact that $\ga(\t)\to 0$ as $\t\to 0$ by $c(\t)\to 0$ as $\to 0$, due to \eqref{Tata}, \eqref{Comp-esti}, \eqref{Estim-phi}and \eqref{llll}.
\end{rem}
\begin{exa}
Consider a one dimensional heat equation with mixed boundary conditions
\begin{align}\label{heat equation}
\left\{
  \begin{array}{ll}
\vspace{0,2cm}
    \displaystyle \frac{\partial}{\partial t}z(t,x)=\frac{\partial^2}{\partial x^2}z(t,x), & \hbox{$0<x<\pi$, $t\geq0$;} \\
\vspace{0,2cm}
    \displaystyle\frac{\partial}{\partial x}z(t,0)+z(t,0)=0,\quad z(t,\pi)=0, & \hbox{$t\geq0$;} \\
    z(0,x)=\varphi(x), & \hbox{$0<x<\pi$.}
  \end{array}
\right.
\end{align}
In order to use our abstract results, we select
\begin{align*}
X:=L^2([0,\pi]), \quad Z:=\left\{f\in H^2([0,\pi]):f(\pi)=0\right\}, \quad \partial X:=\mathbb{C}
\end{align*}
and operators
\begin{align*}
A_mf=f'', \quad Gf=f'(0)\quad and \quad Mf=-f(0), \quad for \quad f\in Z.
\end{align*}
We know that  the operator
\begin{align*}
  A\varphi:=A_m \varphi,\quad D(A)=\{f\in Z: f'(0)=0\}
\end{align*}
generates an immediately norm continuous (even compact) $C_0$-semigroup $\T:=(T(t))_{t\geq 0}$ on $X$ (note that $A$ is self-adjoint). On the other hand, $G$ is surjective. So that the Dirichlet operator $D_\la$ exists for any $\la\in\rho(A)$, see the beginning of this section. As we are in the Hilbert setting, the extrapolation space $X_{-1}$ of $X$ associated with $A$ is isomorph to the topological dual space $(D(A^\ast))',$ where $A^\ast$ is the adjoint operator of $A$. We now put for any $\la\in\rho(A),$
\begin{align*}
B:=(\lambda-A_{-1})D_{\lambda}\in \mathcal{L}(\mathbb{C},D(A^{\ast})^{\prime}).
\end{align*}
A straightforward computation shows  that the adjoint operator of $B$ is given by
\begin{align*}
B^{\ast}\varphi=-\varphi(0),\quad \varphi\in D(A^{\ast})=D(A).
\end{align*}
In addition, $B^{\ast}$ is an admissible observation operator for the adjoint semigroup $\T^{\ast}:=(T^{\ast}(t))_{t\geq 0}$. Hence, by duality, $B$ is an admissible control operator for  the semigroup $\T$, (see \cite[p.126]{Tucsnak Weiss}). In addition, $B^{\ast}$ is a zero-class observation operator  (see \cite[example 3.8]{Jacob Partington Pott}).\\
 Moreover, by computation the Dirichlet operator is given by
\begin{align*}
  (D_0 u)(x) & =(x-\pi)\cdot u ,\quad for \quad 0\leq x\leq \pi;\\
  (D_{\lambda} u)(x) & = \frac{\sinh(\sqrt{\lambda}(x-\pi))}{\sqrt{\lambda}\cosh(\sqrt{\lambda}\pi)}\cdot u,\quad for \quad 0\leq x\leq \pi\quad and \quad \lambda>0.
\end{align*}
It is clear that $Range \left(D_{\lambda}\right)\subset Z$ and the transfer function $H(\lambda):=M D_{\lambda}$ is uniformly bounded on half plan. Consequently, $(A,B,B^{\ast})$ generates a regular system on $L^2([0,\pi]),\C,\C$. It is well know that $A$ generates a compact semigroup in $X$. Hence by  Theorem \ref{main thm I C} the semigroup solution of heat equation with mixed boundary \eqref{heat equation} is compact in $X$.
\end{exa}

\section{The Eventual differentiability under Desch-Schappacher perturbations}\label{eventual-section}
In this section, we still assume that assumptions of Theorem  \ref{staffans-weiss perturbation} are satisfied (hence the perturbed semigroup $\T^{cl}$ exists), and then  discuss conditions for which there exists $\t>0$ such that for any $x\in X$ the map $t\in (\t,\infty)\to T^{cl}(t)x\in X$ is differentiable.

 The following result due to Pazy \cite[p.57]{Pazy} (see also \cite{Pazy1}) gives  a sufficient condition for differentiability of $C_0$--semigrpups on Banach spaces.
\begin{thm}\label{Pazy}
  Let $G:D(G)\subset X\to X$ be the generator of a $C_{0}$-semigroup $V:=(V(t))_{t\geq 0}$  on $X$.   If for some $\mu>\om_0(G)$ we have
  \begin{equation}\label{Pazy condition}
    \tau_0:=\limsup_{|\tau|\rightarrow +\infty}\log {|\tau|}\|R(\mu+i\tau,G)\|<\infty .
  \end{equation}
  Then $V(t)$ is differentiable for $t>\tau_0$. In addition, the semigroup  $V$ is immediately differentiable  whenever $\tau_0=0$.
\end{thm}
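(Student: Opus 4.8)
The statement to be proved is Pazy's differentiability criterion (\thmref{Pazy}): if $\tau_0:=\limsup_{|\tau|\to\infty}(\log|\tau|)\,\|R(\mu+i\tau,G)\|<\infty$ for some $\mu>\om_0(G)$, then the semigroup $V(t)$ generated by $G$ is differentiable for $t>\tau_0$, and immediately differentiable if $\tau_0=0$. The plan is to use the complex inversion (inverse Laplace) representation of the semigroup on the domain, namely that for $x\in D(G)$ and $t>0$,
\begin{align*}
V(t)x=\frac{1}{2\pi i}\int_{\mu-i\infty}^{\mu+i\infty} e^{\lambda t}R(\lambda,G)x\,d\lambda,
\end{align*}
together with the identity $GR(\lambda,G)=\lambda R(\lambda,G)-I$, which lets one write, at least formally,
\begin{align*}
GV(t)x=\frac{1}{2\pi i}\int_{\mu-i\infty}^{\mu+i\infty} e^{\lambda t}\bigl(\lambda R(\lambda,G)-I\bigr)x\,d\lambda.
\end{align*}
The issue is that the extra factor $\lambda$ destroys absolute convergence, so one must show that the oscillation of $e^{\lambda t}$ compensates. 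First I would fix $t>\tau_0$ and choose $\varepsilon>0$ with $t>\tau_0+\varepsilon$; by hypothesis there is $N>0$ such that $\|R(\mu+i\tau,G)\|\le |\tau|^{-1}\cdot|\tau|^{\,\varepsilon\cdot(\text{small})}$ — more precisely $(\log|\tau|)\|R(\mu+i\tau,G)\|\le\tau_0+\varepsilon/2$ for $|\tau|\ge N$, so $\|R(\mu+i\tau,G)\|\le (\tau_0+\varepsilon/2)/\log|\tau|$.

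The key step is to regularize: write $\lambda e^{\lambda t}=\frac{d}{dt}e^{\lambda t}$ and integrate by parts in $\lambda$ along the vertical line, or equivalently observe that on the contour $\lambda=\mu+i\tau$ one has $|e^{\lambda t}|=e^{\mu t}$ is constant, so the convergence is driven entirely by $\|\lambda R(\lambda,G)\|\sim |\tau|\cdot(\tau_0+\varepsilon/2)/\log|\tau|$, which still diverges. The standard remedy is to shift the contour: for $\tau$ large, move the vertical line leftward to $\mathrm{Re}\,\lambda=\mu-\delta(\tau)$ with $\delta(\tau)=c/\log|\tau|$ for a suitable constant $c<t$; the spectral bound \eqref{Pazy condition} guarantees the resolvent still exists and is controlled there (this requires the standard estimate that $R(\lambda,G)$ extends to a region $\{\mathrm{Re}\,\lambda\ge \mu - a/\log|\mathrm{Im}\,\lambda|\}$ with a polynomial-in-$|\tau|$ bound, a consequence of the Neumann series $R(\mu-\delta+i\tau,G)=R(\mu+i\tau,G)\sum_{k\ge0}\delta^k R(\mu+i\tau,G)^k$ which converges when $\delta\|R(\mu+i\tau,G)\|<1$). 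On the shifted portion, $|e^{\lambda t}|=e^{(\mu-\delta(\tau))t}=e^{\mu t}e^{-ct/\log|\tau|}=e^{\mu t}|\tau|^{-ct}$ (up to bounded factors), which decays polynomially; combined with the at-most-polynomial growth of $\|\lambda R(\lambda,G)\|$ on that contour, the integrand becomes integrable once $ct>1$, i.e.\ once $t>1/c$, and $c$ can be taken as close to any value below $1/(\tau_0)$ — tracking the constants carefully yields convergence precisely for $t>\tau_0$.

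So the steps, in order, are: (1) record the Cauchy-integral formula for $V(t)x$ on $D(G)$ and the identity for $GV(t)x$; (2) use the hypothesis \eqref{Pazy condition} plus a Neumann-series argument to extend $R(\cdot,G)$ to a logarithmic region to the left of the abscissa $\mu$, with explicit bounds; (3) deform the contour of integration into this region, bending it leftward by an amount $\sim c/\log|\mathrm{Im}\,\lambda|$; (4) estimate the deformed integral, showing the exponential factor $|\tau|^{-ct}$ beats the polynomial resolvent growth when $t>\tau_0$, so that $\int e^{\lambda t}(\lambda R(\lambda,G)-I)x\,d\lambda$ converges absolutely and defines a bounded operator applied to $x$; (5) justify that this operator equals $GV(t)x$ and depends continuously on $t$, hence $V(\cdot)x$ is $C^1$ on $(\tau_0,\infty)$ with derivative $GV(t)x$, and iterate/bootstrap to get smoothness; (6) note that when $\tau_0=0$ the argument works for every $t>0$, giving immediate differentiability. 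The main obstacle is step (2)–(4): making the contour deformation rigorous and getting the constants to match so that the threshold is exactly $\tau_0$ (not merely some larger quantity) requires care with the logarithmic correction and with the region of analyticity of the resolvent; this is precisely the technical heart of Pazy's argument, and I would follow \cite[p.\,57]{Pazy} for the bookkeeping.
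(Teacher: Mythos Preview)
The paper does not prove this theorem at all: \thmref{Pazy} is quoted from Pazy's monograph \cite[p.\,57]{Pazy} (and \cite{Pazy1}) as a known result, with no argument supplied. So there is nothing in the paper to compare your proposal against; what you have sketched is essentially Pazy's own proof, and citing \cite{Pazy} is exactly what the authors do.

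That said, there is one genuine slip in your outline that would derail the estimates if carried through literally. You write that the contour is shifted to $\mathrm{Re}\,\lambda=\mu-\delta(\tau)$ with $\delta(\tau)=c/\log|\tau|$, and then claim $e^{-\delta(\tau)t}=e^{-ct/\log|\tau|}=|\tau|^{-ct}$. The last equality is false: $|\tau|^{-ct}=e^{-ct\log|\tau|}$, not $e^{-ct/\log|\tau|}$. The correct shift in Pazy's argument is of order $\delta(\tau)\sim b\log|\tau|$ (the Neumann series converges precisely when $\delta\|R(\mu+i\tau,G)\|<1$, and since $\|R(\mu+i\tau,G)\|\lesssim (\tau_0+\varepsilon)/\log|\tau|$ this allows $\delta$ up to a constant times $\log|\tau|$, not its reciprocal). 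With the right $\delta$, one indeed gets $|e^{\lambda t}|\le e^{\mu t}|\tau|^{-bt}$, and the threshold $t>\tau_0$ emerges from matching the admissible $b$ against the polynomial growth of $\lambda R(\lambda,G)$ on the deformed contour. Your steps (1)--(6) are otherwise the right skeleton; just repair the form of $\delta(\tau)$ before tracking constants.
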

In the following result we  generalize a Pazy result on the stability of differentiability under bounded perturbation (see \cite{Pazy}) to Desch-Schappacher  perturbations operators.
\begin{thm}\label{main thm differetiable}
Let the control system $(A,B)$ be well-posed on $X,U$  and $C\in\mathcal{L}(X)$. Assume that the generator  $(A,D(A))$ satisfies the condition  \eqref{Pazy condition}. Then the assumption of Theorem \ref{staffans-weiss perturbation} are satisfied and the perturbed semigroup $\T^{cl}$ is eventually differential from a time $2\tau_0$. On the other hand, if $\t_0=0$ then the semigroup $\T^{cl}$ is immediately differentiable.
\end{thm}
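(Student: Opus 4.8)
The plan is to exploit the variation of constants formula \eqref{convolution R} together with the resolvent identity \eqref{resolvent} (specialized to the bounded case $C=M\in\calL(X)$) to transfer the Pazy condition \eqref{Pazy condition} from $A$ to $A^{cl}$, and then invoke Theorem \ref{Pazy}. First I would verify that the hypotheses of Theorem \ref{staffans-weiss perturbation} indeed hold: since $(A,B)$ is well-posed and $C=M\in\calL(X,U)$, Remark \ref{DS-MV-perturbations} tells us that $(A,B,M)$ is a regular linear system with $I_U$ an admissible feedback, so $A^{cl}=(A_{-1}+BM)_{|X}$ generates a $C_0$-semigroup $\T^{cl}$ on $X$, and moreover $C_\Lambda=M$ on $X$.

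The key analytic step is to estimate $\|R(\mu+i\t,A^{cl})\|$ for $|\t|$ large. Taking Laplace transforms in \eqref{convolution R} and using \eqref{Laplace-F-infty}, or more directly using \eqref{resolvent} in the form $R(\la,A^{cl})=(I-D_\la M)^{-1}R(\la,A)$, I would combine this with the resolvent of $A_{-1}+BM$ written via a Neumann-type series. Concretely, from $R(\la,A^{cl})=R(\la,A)+R(\la,A_{-1})BMR(\la,A^{cl})$ and the admissibility estimates \eqref{adm cont weiss} (giving $\|R(\la,A_{-1})B\|\le c/(\mathrm{Re}\,\la-\om)^{1/p}$) together with boundedness of $M$, one gets that for $\mathrm{Re}\,\la=\mu$ fixed and $\mu$ large, $\|R(\la,A_{-1})BM\|\le c\|M\|/(\mu-\om)^{1/p}<1$, so that $I-R(\la,A_{-1})BM$ is boundedly invertible with uniformly bounded inverse, yielding $R(\la,A^{cl})=\bigl(I-R(\la,A_{-1})BM\bigr)^{-1}R(\la,A)$. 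Hence $\|R(\mu+i\t,A^{cl})\|\le K\,\|R(\mu+i\t,A)\|$ with $K$ independent of $\t$, and therefore
\begin{align*}
\limsup_{|\t|\to+\infty}\log|\t|\,\|R(\mu+i\t,A^{cl})\|\le \limsup_{|\t|\to+\infty}\log|\t|\,K\,\|R(\mu+i\t,A)\|=K\t_0.
\end{align*}

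At first glance this only gives differentiability of $\T^{cl}$ from time $K\t_0$, not $2\t_0$. To sharpen the constant to $2\t_0$ I would instead iterate \eqref{convolution R} once more: write $T^{cl}(t)=T(t)+(R_{-1}*(JBMT))(t)$ and substitute the formula for $T^{cl}$ inside $R$ again, so that $R(t)$ is expressed as a sum of a term built from $T*(BM\,T)$ (a convolution of two copies of the differentiable semigroup, hence differentiable for $t>2\t_0$ by the standard product rule argument for convolutions of differentiable semigroups) plus a remainder that is a triple convolution carrying an extra smoothing factor $\|R(\la,A_{-1})B\|=O((\mathrm{Re}\,\la)^{-1/p})$ in the transform domain; this extra decay kills the $\log|\t|$ weight so the remainder contributes nothing to $\t_0$. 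Then apply Theorem \ref{Pazy} to conclude $\T^{cl}$ is differentiable for $t>2\t_0$, and immediately differentiable when $\t_0=0$. The main obstacle is making the convolution/differentiation bookkeeping rigorous in the extrapolated space $X_{-1}$ — one must check that the intermediate convolutions land back in $X$ and that the formal Laplace-transform manipulations are justified by the resolvent estimates \eqref{adm cont weiss} and the uniform bound \eqref{estimation-transfer-function} on the transfer function; the well-posedness of $(A,B)$ and boundedness of $M$ are exactly what make this work.
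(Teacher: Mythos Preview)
Your first block of argument is exactly the paper's proof: from the resolvent identity
\[
R(\la,A^{cl})=R(\la,A)+R(\la,A_{-1})BM\,R(\la,A^{cl}),
\]
together with the admissibility estimate \eqref{adm cont weiss}, one gets $\|R(\mu+i\t,A^{cl})\|\le K\|R(\mu+i\t,A)\|$ with $K=(1-c\|M\|(\mu-\om)^{-1/p})^{-1}$, and then one applies Theorem \ref{Pazy}. The point you missed is that $K$ is not a mysterious constant: since $\mu$ is at your disposal, simply choose $\mu>\om+(2c\|M\|)^p$, so that $c\|M\|(\mu-\om)^{-1/p}<\tfrac12$ and hence $K\le 2$. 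This immediately yields
\[
\limsup_{|\t|\to\infty}\log|\t|\,\|R(\mu+i\t,A^{cl})\|\le 2\t_0,
\]
and Theorem \ref{Pazy} gives differentiability of $\T^{cl}$ for $t>2\t_0$ (and immediate differentiability when $\t_0=0$). That is literally the paper's argument; no further work is required. (One also needs the standard observation that if the Pazy condition holds for $A$ at some $\mu_1$ with constant $\t_0$, it holds for every larger $\mu$ with the same $\t_0$; this follows from the resolvent identity since $\|R(\mu_1+i\t,A)\|\to 0$ forces $\|(I+(\mu-\mu_1)R(\mu_1+i\t,A))^{-1}\|\to 1$.)

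Your proposed ``second step'' --- iterating \eqref{convolution R} and arguing via convolutions in $X_{-1}$ to sharpen $K\t_0$ to $2\t_0$ --- is therefore unnecessary. It is also, as you yourself flag, not rigorous as written: the claim that the triple-convolution remainder ``contributes nothing to $\t_0$'' because of an extra $(\mathrm{Re}\,\la)^{-1/p}$ factor is not justified (that factor is constant along the vertical line $\mathrm{Re}\,\la=\mu$ and does not help against $\log|\t|$), and the differentiation-of-convolution argument in the extrapolated space would require substantial additional bookkeeping. Drop it and keep only the resolvent estimate with the explicit choice of $\mu$.
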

\begin{proof} According to Remark \ref{DS-MV-perturbations}, the part of $A_{-1}+BC$ on $X$ denoted by $A^{cl}$ generates a strongly continuous semigroup $\T^{cl}$ given by the variation of constants formula \eqref{VCF-T-cl-desch}. Taking Laplace transform on both sides of this formula we obtain
\begin{align*}
R(\lambda,A^{cl})=R(\lambda,A)+R(\lambda,A_{-1})BCR(\lambda,A^{cl}),\qquad \la\in \rho(A).
\end{align*}
Let $\om>\om_0(A)$. According to \eqref{adm cont weiss} there exists a contants $c>0$ such that for any $\la\in\C$ with ${\rm Re}\la>\om,$
\begin{align*}
\|R(\lambda,A^{cl})\|=\|R(\lambda,A)\|+ \frac{c\|C\|}{\left({\rm Re}\la-\om\right)^{\frac{1}{p}}} \|R(\lambda,A^{cl})\|
\end{align*}
Now, for $\mu>\om+(2 c \|C\|)^p:=\om',$ and $\t\in\R$, we obtain
\begin{align}\label{R-cl-R}
\|R(\mu+i\tau,A^{cl})\|\le 2 \|R(\mu+i\tau,A)\|.
\end{align}
Thus the result follows from Theorem \ref{Pazy}.
\end{proof}
\begin{rem}\label{Pazy-condition-MV}
 Assume that $A:D(A)\subset X\to X$ is a generator of a $C_0$--semigroup $\T$ on $X$, $B\in\calL(X)$ and $C\in\calL(D(A),X)$ such that $(C,A)$ is well-posed. Then the condition of Theorem \ref{staffans-weiss perturbation} are verified and the operator $A^{cl}=A+BC$ with domain $D(A^{cl})=D(A)$, generates a $C_0$-semigroup $\T^{cl}$ on $X$ (see also \cite{Hadd-SF}). Moreover,
 \begin{align*}
 R(\la,A^{cl})=R(\la,A)+R(\la,A^{cl})BCR(\la,A),\qquad \la\in\rho(A).
 \end{align*}
Let $\mu\in\R$ such that $\mu>\om>\om_0$ and $\t\in\R$. According to \eqref{adm obs weiss}, there exists a constant $c>0$ such that
\begin{align*}
\|R(\mu+i\tau,A^{cl})\|\le \|R(\mu+i\tau,A)\|+ \frac{c \|B\|}{\left(\mu-\om\right)^{1-\frac{1}{p}}} \|R(\mu+i\tau,A^{cl})\|.
\end{align*}
Now for $\mu>\om+(2c\|B\|)^{\frac{p}{p-1}},$ we have
\begin{align*}
\|R(\mu+i\tau,A^{cl})\|\le 2 \|R(\mu+i\tau,A)\|.
\end{align*}
As $A$ satisfies the condition \eqref{Pazy condition},   by Theorem \ref{Pazy}, the generator of the perturbed semigroup   $\T^{cl}$ satisfies also this condition and hence $\T^{cl}$ is a differential semigroup.
\end{rem}
We also have the following observation about immediate norm continuity for perturbed semigroups on Hilbert spaces.
\begin{rem}\label{Hilbert-imm-norm-cont}
Assume that we work in the Hilbert setting and let us in the situations of Theorem \ref{main thm differetiable} and/or Remark \ref{Pazy-condition-MV}. In both cases we have proved that the estimate \eqref{R-cl-R} for the generators $A$ and $A^{cl}$. It is well-known (see e.g. \cite[p.115]{Engel Nagel}) that $\T$ is immediately norm continuous on a Hilbert space $X$ if and only if $\|R(\mu+i\t,A)\|\to 0$ as $\t\to\pm\infty$. Now the inequality  \eqref{R-cl-R} shows that the immediate norm continuity is stable under Miyadera-Voigt and/or Desch-Schappacher perturbations.
\end{rem}

\section{Application to boundary integro-differential equations}\label{Appl-Volterra}
let $X$ and $Z$ be  Banach spaces with $Z\hookrightarrow X$ continuous and dense embedding. Let $A_m:Z\to X$ be a closed linear (differential) operator and an application $k:\R^+\to \C$ a measurable function. Consider the following boundary integro-differential equation
 \begin{equation}\label{Voltera 2}
  \left\{
    \begin{array}{ll}
      \dot{x}(t)=A_{m}x(t)+\displaystyle\int_{0}^{t}k(t-s)Px(s)ds, & \hbox{$t\geq 0$} \\
       Gx(t)=Mx(t)  , & \hbox{$t\geq 0$,}\\
      x(0)=x, & \hbox{}
    \end{array}
  \right.
\end{equation}
where the initial condition $x\in X$ and boundary operators $G:Z\longrightarrow \partial X$ and $M: Z\longrightarrow \partial X$ are linear.

The objective of this section is to study the well-posedness of the equation \eqref{Voltera 2} and establish regularity of the solution.
In the spirit of Greiner \cite{Greiner} and Salamon \cite{Salamon}, we introduce the hypothesis
\begin{itemize}
  \item[{\bf(H1)}] $A:=A_m$ with domain $D(A)=\ker G$ generates a $C_{0}$-semigroup $(T(t))_{t\geq 0}$ on $X$.
  \item[{\bf(H2)}] The operator $G:Z\longrightarrow \partial X$ is surjective.
\end{itemize}
As discussed in the introductory section, let $D_\la$ be the Dirichlet operator associated with $A_m$ and $G$ and set $B:=(\lambda - A_{-1})D_{\lambda}$ for $\la\in \rho(A)$. On the other hand, we select
\begin{align}\label{restrection-operators}
C:=M_{|D(A)}, \quad\text{and}\quad  \P:={P}_{|D(A)}.
\end{align}
We further assume that
\begin{itemize}
  \item[{\bf (H3)}] $(A,B,C)$ is a regular system on $X,\partial X, \partial X$ and $I:\partial X \longrightarrow \partial X$ is a feedback admissible with $C$ is a zero-class admissible.
  \item[{\bf (H4)}] $(A,B,\P)$ is a regular system on $X,\partial X, X$.
\end{itemize}
In order to study the existence and regularity  of the solution of the integro-differential equation \eqref{Voltera 2}, we need to introduce a Bergman space. Let then $h:\mathbb{R}^{+}\longrightarrow \mathbb{R}^{+}$ be an admissible function (i.e $h$ is increasing, convex and $h(0)=0$). Hereafter, we assume that for $s>1,$
\begin{align*}
\int^1_0 h(\si)^{1-s}d\si<\infty.
\end{align*}
Let $p,q\in (1,+\infty)$ be such that
\begin{align*}
q=\frac{ps}{s-1}.
\end{align*}
We define the sector
\begin{align*}
\Sigma_{h}:=\left\{\si+i\tau\in \mathbb{C}, \si>0\text{ and } |\tau|<h(\si)\right\}
\end{align*}
The Bergman space is defined by
\begin{align*}
\B^{q}_h(\Sigma_{h},X):= \left\{f:\Si_h\to X\;\text{holomorphic such that} \|f\|_{\B^{q}_h(\Sigma_{h},X)}<\infty\right\}
\end{align*}
with the norm
\begin{align*}
\|f\|_{\B^{q}_h(\Sigma_{h},X)}:=\left(\int\int_{\Sigma_{h}}\|f(\si+i\tau)\|^{q}d\si d\tau\right)^{\frac{1}{q}}<\infty.
\end{align*}
We shall assume
\begin{itemize}
  \item [{\bf (H5)}] $k(\cdot)\in \B^{q}_h(\Sigma_{h},\C)$.
\end{itemize}
According to \cite{Barta1},  the following  translation semigroup on $\B^{q}_h(\Sigma_{h},X),$
\begin{align*}
(S(t)f)(z):=f(t+z)
\end{align*}
with  generator
\begin{align*}
\frac{d}{dz}f=f', \quad D\left(\frac{d}{dz}\right):=\left\{f\in \B^{q}_h(\Sigma_{h},X), f'\in \B^{q}_h(\Sigma_{h},X)\right\}
\end{align*}
is analytic. We first use product spaces to reformulate the equation \eqref{Voltera 2} as abstract boundary value problem. In fact, consider the Banach space
\begin{align*}
\mathcal{X}:=X\times \B^{q}_h(\Sigma_{h},X)\quad \text{with norm}\quad \left\|(\begin{smallmatrix} x\\ f\end{smallmatrix})\right\|:=\|x\|+\|f\|_{\B^{q}_h(\Sigma_{h},X)}.
\end{align*}
Moreover we consider the space
\begin{align*}
\calZ:=Z\times D\left(\frac{d}{dz}\right).
\end{align*}
The equation  \eqref{Voltera 2} can be rewritten as
 \begin{equation}\label{Boundary Voltera}
  \left\{
    \begin{array}{ll}
      \dot{z}(t)=\mathcal{A}_{m}z(t)+\mathcal{P}z(t), & \hbox{$t\geq 0$} \\
       \mathcal {G}z(t)=\mathcal{M}z(t)  , & \hbox{$t\geq 0$,}\\
      z(0)=z\in \mathcal{X} & \hbox{.}
    \end{array}
  \right.
\end{equation}
where $\calA_m,\calP: \calZ\to \calX$ are given by
\begin{align*}
\calA_m:=\begin{pmatrix}
           A_m & 0 \\
           0 & \frac{d}{dz}
         \end{pmatrix}, \qquad \calP:=\begin{pmatrix}
           0 & \delta_0 \\
           k(\cdot)P & 0
         \end{pmatrix},
\end{align*}
and the boundary operators $\calG,\calM:\calZ\to \partial X$ are defined by
\begin{align*}
\calG:=\begin{pmatrix} G& 0 \end{pmatrix},\qquad \calM:=\begin{pmatrix} M& 0 \end{pmatrix}.
\end{align*}
Now consider the operator
\begin{align*}
\calA:=\calA_m,\quad D(\calA)=\left\{x\in Z:Gx=Mx\right\}\times D\left(\frac{d}{dz}\right).
\end{align*}
The boundary problem \eqref{Boundary Voltera} is reformulated again as a Cauchy problem
\begin{align}\label{Cauchy-Volterra}
\begin{cases}\dot{z}(t)=\calA z(t)+\calP z(t),& t\ge 0,\cr z(0)=z.  \end{cases}
\end{align}
\begin{lem}\label{generation-calA}
Let assumptions (H1),(H2) and (H3) be satisfied. The the operator $(\calA,D(\calA))$ is the generator of a strongly continuous semigroup $(\calT(t))_{t\ge 0}$ on $\calX,$ given by
\begin{align*}
\calT(t)=\begin{pmatrix}T^{cl}(t)& 0\\ 0& S(t)\end{pmatrix},\qquad t\ge 0,
\end{align*}
where $(T^{cl}(t))_{t\ge 0}$  is the strongly continuous semigroup on $X$ generated by the operator
\begin{align}\label{abc}
A^{cl}:=A_{-1}+BC_\La\quad\text{with}\quad D(A^{cl})=\left\{x\in D(C_\Lambda): (A_{-1}+BC_\La)x\in X\right\}.
\end{align}
\end{lem}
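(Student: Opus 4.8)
The statement is essentially a "diagonal plus triangular" generation result on the product space $\calX = X \times \B^q_h(\Sigma_h, X)$. The decomposition $\calT_m$, $\calG$, $\calM$ all respect the product structure in a block-diagonal way, so my first step is to verify that $(\calA, D(\calA))$ with the given domain is precisely the product of two independently-treated boundary perturbation problems: the first coordinate is exactly the setting of Theorem~\ref{staffans-weiss perturbation} (via Remark~\ref{unbounded perturbation domain}, with $A_m$, $G$, $M$ and the operators $A$, $B$, $C$ from \eqref{restrection-operators}), and the second coordinate is the unperturbed translation problem $\frac{d}{dz}$ with trivial boundary condition $\calG = (G\ 0)$, $\calM = (M\ 0)$ acting only on the first component.

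\textbf{Step 1: Identify the first block.} Under (H1), (H2), (H3), the operators $A = A_m|_{\ker G}$, $B = (\lambda - A_{-1})D_\lambda$, and $C = M|_{D(A)}$ satisfy: $A$ generates $\T$ on $X$, $(A,B)$ is admissible (contained in regularity), and $(A,B,C)$ is regular with $I_{\partial X}$ an admissible feedback (H3). Hence Theorem~\ref{staffans-weiss perturbation} applies and the part of $A_{-1} + B C_\Lambda$ on $X$ — which is $A^{cl}$ as in \eqref{abc} — generates a $C_0$-semigroup $\T^{cl} = (T^{cl}(t))_{t\ge 0}$ on $X$. Moreover, by Remark~\ref{unbounded perturbation domain}, this operator coincides with $\calA$ restricted to the first coordinate, i.e. $A_m$ with domain $\{x\in Z: Gx=Mx\}$. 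This step is mostly bookkeeping: matching hypotheses to the earlier theorem.

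\textbf{Step 2: Identify the second block.} On $\B^q_h(\Sigma_h, X)$, the boundary operator attached to the translation generator is vacuous (the second row of $\calG$ and $\calM$ is zero), so the second component of $\calA$ is simply $\frac{d}{dz}$ with domain $D(\frac{d}{dz})$, which by \cite{Barta1} generates the (analytic) translation semigroup $(S(t))_{t\ge 0}$. No perturbation is present here.

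\textbf{Step 3: Assemble the product.} Since $\calA$ decomposes as $\mathrm{diag}(A^{cl}, \frac{d}{dz})$ on $D(A^{cl}) \times D(\frac{d}{dz}) = D(\calA)$, and a block-diagonal operator on a product Banach space generates the $C_0$-semigroup obtained by putting the two coordinate semigroups on the diagonal, we conclude $(\calA, D(\calA))$ generates $\calT(t) = \mathrm{diag}(T^{cl}(t), S(t))$ on $\calX$. One should briefly check strong continuity of the product (immediate from strong continuity of each factor and the product norm) and that the generator of $\mathrm{diag}(T^{cl}(t), S(t))$ is exactly the diagonal operator with the product domain — a standard fact, but worth citing. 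The main (minor) obstacle is verifying cleanly that the domain condition $\{x\in Z: Gx=Mx\}$ produced by the abstract Staffans--Weiss machinery in \eqref{Operator-closed}–\eqref{abc} really is the domain appearing in $D(\calA)$; this is exactly the content of \cite{Hadd Manzo Ghandi} recalled in Remark~\ref{unbounded perturbation domain}, so it reduces to invoking that reference. Everything else is routine once the two blocks are recognized as independent.
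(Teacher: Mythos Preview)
Your proposal is correct and follows essentially the same approach as the paper's own proof: invoke Remark~\ref{unbounded perturbation domain} together with Theorem~\ref{staffans-weiss perturbation} under (H1)--(H3) to identify the first block with $A^{cl}$ generating $\T^{cl}$, recognize the second block as the unperturbed shift generator $\frac{d}{dz}$, and conclude by the diagonal structure that $\calA=\mathrm{diag}(A^{cl},\frac{d}{dz})$ generates $\calT(t)=\mathrm{diag}(T^{cl}(t),S(t))$. The paper's proof is terser (it omits your Step~3 justification entirely), but the logical skeleton is identical.
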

\begin{proof}
According to Remark \ref{unbounded perturbation domain}, the following operator
\begin{align*}
A^{cl}:=A_m,\quad D(A^{cl})=\{x\in Z:Gx=Mx\}
\end{align*}
coincides with the operator defined by \eqref{abc}, which is a generator of a strongly continuous semigroup $\T^{cl}:=(T^{cl}(t))_{t\ge 0}$, due to (H3) and Theorem \ref{staffans-weiss perturbation}. With these we have
\begin{align*}
\calA=\begin{pmatrix} A^{cl}& 0\\0& \frac{d}{dz}\end{pmatrix},\qquad D(\calA)=D(A^{cl})\times D\left(\frac{d}{dz}\right).
\end{align*}
This ends the proof.
\end{proof}
\begin{thm}\label{Well-posed-Volterra}
Let assumptions (H1) to (H5) be satisfied. Then the operator $(\calA+\calP, D(\calA))$ is a generator of a strongly continuous semigroup on $\calX$.
\end{thm}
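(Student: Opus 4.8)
The plan is to recognize $\calP$ as a Miyadera--Voigt perturbation of $\calA$ relative to the intermediate space $\calZ=Z\times D(\tfrac{d}{dz})$, and then to quote the Miyadera--Voigt theorem (recalled in the introduction; see also \cite[Ch.~III]{Engel Nagel}) to conclude that $(\calA+\calP,D(\calA))$ generates a $C_0$-semigroup on $\calX$. By Lemma \ref{generation-calA}, hypotheses (H1)--(H3) already guarantee that $\calA$ generates $\calT(t)=\mathrm{diag}(T^{cl}(t),S(t))$, with $D(\calA)=D(A^{cl})\times D(\tfrac{d}{dz})\subset\calZ\hookrightarrow\calX$ and $\calT(t)D(\calA)\subset D(\calA)$. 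The first step is to note that $\calP\in\calL(\calZ,\calX)$: the lower--left entry satisfies $\|k(\cdot)Px\|_{\B^q_h(\Sigma_h,X)}=\|k\|_{\B^q_h(\Sigma_h,\C)}\,\|Px\|_X$ by (H5), so it is bounded from $Z$ into $\B^q_h(\Sigma_h,X)$; and the upper--right entry $\delta_0$ is bounded from $D(\tfrac{d}{dz})$ into $X$ because, by \cite{Barta1}, the point evaluation $\delta_0$ is an admissible observation operator for the shift semigroup $S$ on $\B^q_h(\Sigma_h,X)$ --- this is precisely where the geometry of $\Sigma_h$, the exponent relation $q=ps/(s-1)$ and the hypothesis $\int_0^1 h(\sigma)^{1-s}\,d\sigma<\infty$ come into play.

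Next, for $z=(x,f)\in D(\calA)$ I would use $(S(t)f)(0)=f(t)$ and $T^{cl}(t)x\in D(A^{cl})\subset Z$ to compute
\begin{align*}
\calP\calT(t)z=\begin{pmatrix}\delta_0 S(t)f\\ k(\cdot)PT^{cl}(t)x\end{pmatrix}=\begin{pmatrix}f(t)\\ k(\cdot)PT^{cl}(t)x\end{pmatrix},
\end{align*}
so that $\|\calP\calT(t)z\|_\calX=\|f(t)\|_X+\|k\|_{\B^q_h(\Sigma_h,\C)}\|PT^{cl}(t)x\|_X$, and then bound each term by combining an $L^p$--admissibility estimate with H\"older's inequality on $[0,\alpha]$. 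For the first term, admissibility of $\delta_0$ for $S$ gives $\int_0^\alpha\|f(t)\|_X^p\,dt=\int_0^\alpha\|\delta_0 S(t)f\|_X^p\,dt\le\gamma_0(\alpha)^p\|f\|_{\B^q_h(\Sigma_h,X)}^p$, hence $\int_0^\alpha\|f(t)\|_X\,dt\le\alpha^{1-\frac1p}\gamma_0(\alpha)\|f\|_{\B^q_h(\Sigma_h,X)}$. For the second term one must first show that $P$ is admissible for the perturbed semigroup $T^{cl}$, and this is where (H4) enters decisively: since $(A,B,\P)$ is a regular system, $\P$ is admissible for $A$, one has $Z\subset D(P_\Lambda)$ with $(P_\Lambda)_{|Z}=P$, and ${\rm Range}(\Phi_t)\subset D(P_\Lambda)$ for a.e. $t$ with $P_\Lambda\Phi_t=(\F^P_\infty\,\cdot)(t)$, where $\F^P_\infty$ is the (bounded) input--output map of $(A,B,\P)$. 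Applying $P_\Lambda$ to the variation of constants formula $T^{cl}(t)x=T(t)x+\Phi_t\bigl(C_\Lambda T^{cl}(\cdot)x\bigr)$ and using Theorem \ref{staffans-weiss perturbation}(iii) (admissibility of $C_\Lambda$ for $T^{cl}$) together with the boundedness of $\F^P_\infty$, one gets $\bigl(\int_0^\tau\|PT^{cl}(t)x\|_X^p\,dt\bigr)^{1/p}\le c(\tau)\|x\|_X$ with $c$ bounded on bounded intervals, so that $\int_0^\alpha\|PT^{cl}(t)x\|_X\,dt\le\alpha^{1-\frac1p}c(\alpha)\|x\|_X$.

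Combining the two estimates gives
\begin{align*}
\int_0^\alpha\|\calP\calT(t)z\|_\calX\,dt\le\alpha^{1-\frac1p}\bigl(\gamma_0(\alpha)+\|k\|_{\B^q_h(\Sigma_h,\C)}c(\alpha)\bigr)\,\|z\|_\calX,\qquad z\in D(\calA),
\end{align*}
and since $\gamma_0(\alpha)$ and $c(\alpha)$ remain bounded as $\alpha\to 0^+$ while $\alpha^{1-\frac1p}\to 0$, one may fix $\alpha>0$ small enough that the constant in front is strictly less than $1$. That is exactly the Miyadera--Voigt condition for the pair $(\calA,\calP)$, so $(\calA+\calP,D(\calA))$ generates a $C_0$-semigroup on $\calX$, which is the assertion. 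I expect the main obstacle to be the step that $P$ is admissible for $T^{cl}$ (not merely for $T$): it requires a careful bookkeeping of Yosida extensions with respect to $A$ versus $A^{cl}$ and uses the regularity of $(A,B,\P)$ from (H4) in an essential way; the other genuinely non-routine input is the fact, borrowed from \cite{Barta1}, that the trace $\delta_0$ is an admissible observation operator for the Bergman--space shift semigroup $S$.
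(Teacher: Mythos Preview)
Your proposal is correct and follows essentially the same route as the paper: show that $\calP$ is a Miyadera--Voigt perturbation of $\calA$ by estimating the two components of $\calP\calT(t)(\begin{smallmatrix}x\\ f\end{smallmatrix})$ separately, using (H4) and feedback theory for the admissibility of $\P_\Lambda$ with respect to $\T^{cl}$, and a Bergman-space point-evaluation bound for $\|f(t)\|_X$. The only cosmetic differences are that the paper quotes the $L^p$-admissibility generation result from \cite{Hadd-SF} directly (rather than passing to $L^1$ via H\"older as you do) and references \cite[Lem.~4.3]{Barta2} for the estimate $\int_0^\alpha\|f(t)\|_X^p\,dt\le\kappa\|f\|_{\B^q_h}^p$ instead of \cite{Barta1}.
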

\begin{proof}
According to \cite{Hadd-SF}, it suffices to prove that $\calP$ is an admissible operator for $\calA$. Let $(\begin{smallmatrix}x\\f\end{smallmatrix})\in D(\calA)$. As $\calT(t)(\begin{smallmatrix}x\\f\end{smallmatrix})\in D(\calA),$ then
\begin{align*}
T^{cl}(t)x\in D(A^{cl})\quad\text{and}\quad S(t)f\in D\left(\frac{d}{dz}\right),
\end{align*}
for any $t\ge 0$. According to \eqref{inclusion-important} and \eqref{restrection-operators}, we have
\begin{align*}
\calP\calT(t)(\begin{smallmatrix}x\\f\end{smallmatrix})=\begin{pmatrix} f(t)\\ k(\cdot)\P_{\Lambda}T^{cl}(t)x\end{pmatrix},\qquad t\ge 0.
\end{align*}
Here $\P_\Lambda$ is the Yosida extension of $\P$ relatively to $A,$. We recall that from feedback theory and the condition (H4), the operator $\P_\Lambda$ is an admissible observation operator for $\T^{cl}$. Then for  constants $\la>0$ and $\,p>1,$ there exist constants $\ga>0$ and $c_p>0$ such that
\begin{align*}
\int^\al_0 \left\|\calP \calT(t)(\begin{smallmatrix}x\\f\end{smallmatrix})\right\|^pdt&\le c_p \left(\int^\al_0 \|f(t)\|^p_Xdt+\int^\al_0 \|k(\cdot)\P_\La T^{cl}(t)x\|^p_{\B^q_h(\Sigma_h,X)}dt\right)\cr & \le   c_p \int^\al_0 \|f(t)\|^p_Xdt+ \int^\al_0 \left(\int\int_{\Si_h} \|k(\si+i\tau)\P_\La T^{cl}(t)x\|^qd\si d\tau\right)^{\frac{p}{q}}dt\cr & \le c_p \int^\al_0 \|f(t)\|^p_Xdt+c_p \|k\|^p_{\B^q_h(\Sigma_h,\C)} \int^\al_0\|\P_\La T^{cl}(t)x\|^p dt\cr &\le c_p \int^\al_0 \|f(t)\|^p_Xdt+c_p \ga^p \|k\|^p_{\B^q_h(\Sigma_h,\C)}\|x\|^p.
\end{align*}
On the other hand, using Cauchy formula, Jensen's inequality and similar arguments as in \cite[Lem.4.3]{Barta2}, one can see that there exists a constant $\kappa>0$ such that
\begin{align*}
\int^\al_0 \|f(t)\|^p_Xdt\le \kappa \|f\|^p_{\B^q_h(\Si_h,X)}.
\end{align*}
Now by taking $\vartheta:=c_p\max\{\kappa, \ga^p \|k\|^p_{\B^q_h(\Sigma_h,\C)}\},$ we obtain
\begin{align*}
\int^\al_0 \left\|\calP \calT(t)(\begin{smallmatrix}x\\f\end{smallmatrix})\right\|^pdt\le \vartheta (\|x\|+\|f\|_{\B^q_h(\Si_h,X)})^p.
\end{align*}
This ends the proof.
\end{proof}
\begin{thm}\label{volterra-nc}
Let assumptions (H1) to (H5) be satisfied. Moreover, we assume that the operator $A$ generates an immediately norm continuous semigroup on $X$. Then the operator $(\calA+\calP,D(\calA))$ generates  an immediately norm continuous semigroup on $\calX$ as well.
\end{thm}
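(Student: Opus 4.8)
The plan is to exploit the product structure of Lemma~\ref{generation-calA}: $\calA+\calP$ is a Miyadera--Voigt perturbation of the block--diagonal operator $\calA$, and I would recast it in the Staffans--Weiss framework with a \emph{bounded} control operator, so that the immediate norm continuity of the unperturbed semigroup $\calT$ (which I establish first, using Theorem~\ref{main thm I N C}) transfers to $\calT^{cl}$ through the bounded-control-operator case recorded in the second item of Remark~\ref{NC-Miyadera}. So, first I would check that $\calT$ is immediately norm continuous. By Lemma~\ref{generation-calA}, $\calA$ generates $\calT(t)=\mathrm{diag}\bigl(T^{cl}(t),S(t)\bigr)$ on $\calX=X\times\B^{q}_h(\Sigma_h,X)$. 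Under (H1)--(H3) the triple $(A,B,C)$ is a regular linear system on $X,\partial X,\partial X$ for which $I_{\partial X}$ is an admissible feedback and $C$ is zero-class admissible; since $A$ generates an immediately norm continuous semigroup by hypothesis, Theorem~\ref{main thm I N C} yields that $T^{cl}$ is immediately norm continuous on $X$. On the other hand the translation semigroup $S$ on the Bergman space $\B^{q}_h(\Sigma_h,X)$ is analytic, hence immediately norm continuous. Since the norm on $\calX$ is the sum norm, $\|\calT(t+h)-\calT(t)\|\le\|T^{cl}(t+h)-T^{cl}(t)\|+\|S(t+h)-S(t)\|$, and therefore $\calT$ is immediately norm continuous on $\calX$.

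Next I would bring in the perturbation $\calP$. The proof of Theorem~\ref{Well-posed-Volterra} already shows that $\calP$ is an $L^{p}$-admissible observation operator for $\calA$, i.e. $\int_0^{\alpha}\|\calP\calT(t)z\|^{p}\,dt\le\vartheta\,\|z\|^{p}$ for $z\in D(\calA)$. Regarding $I_{\calX}\in\calL(\calX)$ as a bounded control operator and $\calP$ as the observation operator, I would argue that $(\calA,I_{\calX},\calP)$ is a regular linear system on $\calX,\calX,\calX$ for which $I_{\calX}$ is an admissible feedback: the feedthrough vanishes because the control operator is bounded, and the $L^{p}$-admissibility of $\calP$ makes the associated $L^{1}$ Miyadera constant over a short enough time interval strictly less than one, so that $I_{\calX}-\F_{\infty}$ has a bounded inverse. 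By Theorem~\ref{staffans-weiss perturbation} the closed-loop operator of this system is $\calA+\calP$ with domain $D(\calA)$ (the domain identification being exactly the one in Remark~\ref{Pazy-condition-MV}), that is, it is the generator of the semigroup $\calT^{cl}$ furnished by Theorem~\ref{Well-posed-Volterra}. We are then in the situation described by the second item of Remark~\ref{NC-Miyadera}: since the control operator is bounded, no zero-class hypothesis on $\calP$ is needed, and the immediate norm continuity of $\calT$ established in the first step is inherited by $\calT^{cl}$. This proves the theorem.

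The main obstacle is the second step: one must make sure that the Miyadera--Voigt perturbation $\calP$ --- which is genuinely unbounded on $\calX$ because it involves the point evaluation $\delta_0$ on the Bergman space and the observation operator $P$, and which has no evident zero-class property --- really does fall under the bounded-control-operator case, and that the feedback closed-loop operator coincides with $\calA+\calP$ on $D(\calA)$ rather than on some larger Yosida-type domain. Once this identification is secured, everything else is bookkeeping with the $L^{p}$-admissibility estimate already produced in the proof of Theorem~\ref{Well-posed-Volterra} and with the block structure of $\calT$ coming from Lemma~\ref{generation-calA}.
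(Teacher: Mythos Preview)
Your proposal is correct and follows essentially the same route as the paper's proof: first use Theorem~\ref{main thm I N C} together with the analyticity of the shift semigroup on the Bergman space to obtain immediate norm continuity of the diagonal semigroup $\calT$ generated by $\calA$, and then invoke item~(2) of Remark~\ref{NC-Miyadera} (the Miyadera--Voigt case, bounded control operator) to transfer this property to the semigroup generated by $\calA+\calP$. The paper's proof is terser and simply cites Remark~\ref{NC-Miyadera} without spelling out, as you do, that one is taking $B=I_{\calX}$ and $C=\calP$ in the Staffans--Weiss framework; the concerns you raise in your final paragraph about the admissible feedback and the domain identification $D(A^{cl})=D(\calA)$ are standard in the Miyadera--Voigt setting (cf.\ \cite{Hadd-SF} and Remark~\ref{Pazy-condition-MV}) and are implicitly subsumed in the paper's appeal to that remark.
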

\begin{proof}
Theorem \ref{main thm I N C} show that the operator $A^{cl}$ generates an immediately norm continuous semigroup $\T^{cl}$ on $X$. On the other hand, according \cite{Barta1}, we know that the shift semigroup $S$ is analytic in the Bergman space $\B^q_h(\Si_h,X)$, hence it is immediately norm continuous semigroup. This show that the semigroup $(\calT(t))_{t\ge 0}$ generated by $\calA$ is immediately norm continuous. As $\calP$ is a Miyadera-Voigt perturbation for $\calA,$ then by Remark \ref{NC-Miyadera} the operator $(\calA+\calP,D(\calA))$ generates  an immediately norm continuous semigroup on $\calX$.
\end{proof}
\begin{thm}\label{volterra-ev-diff}
Assume that $M\in\calL(X,\partial X)$ and  conditions (H1) to (H5) be satisfied. If for some $\mu>\om_0(A)$ we have
\begin{align}\label{paz-cond}
\t_0:=\limsup_{|\t|\to\infty}\log(|\t|)\|R(\mu+i\t,A)\|<\infty,
\end{align}
then the semigroup generated by $(\calA+\calP,D(\calA))$ is eventually differentiable.
\end{thm}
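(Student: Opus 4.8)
\emph{Overall strategy.} I would verify Pazy's resolvent condition \eqref{Pazy condition} for the generator $\calA+\calP$ and then invoke Theorem \ref{Pazy}. The route is $\mathrm{diag}(A,\tfrac{d}{dz})\rightsquigarrow\calA\rightsquigarrow\calA+\calP$, transferring a two‑sided resolvent bound through each perturbation step. Throughout, "large $\mu$" means $\mu$ larger than the growth bound of whichever generator is currently in play; since every perturbed semigroup below has finite exponential type, such $\mu$ always exist.

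\emph{Step 1: $\calA$ satisfies \eqref{Pazy condition}.} Since $M\in\calL(X,\partial X)$, we are in the Desch--Schappacher situation of Remark \ref{DS-MV-perturbations}: $C=M_{|D(A)}$ extends to an element of $\calL(X,\partial X)$ and, by (H2)--(H3), $(A,B)$ is well‑posed, so $A^{cl}$ generates $\T^{cl}$ via \eqref{VCF-T-cl-desch}. Arguing exactly as in the proof of Theorem \ref{main thm differetiable} — Laplace transforming \eqref{VCF-T-cl-desch} and using the estimate \eqref{adm cont weiss} for $R(\lambda,A_{-1})B$ — one obtains $\om'>\om_0(A)$ with
\begin{align*}
\|R(\mu+i\tau,A^{cl})\|\le 2\,\|R(\mu+i\tau,A)\|,\qquad \mu>\om',\ \tau\in\R,
\end{align*}
which together with the hypothesis \eqref{paz-cond} shows that $A^{cl}$ satisfies \eqref{Pazy condition}. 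On the other hand, the translation semigroup $S$ on $\B^{q}_h(\Sigma_h,X)$ is analytic by \cite{Barta1}, so $\tfrac{d}{dz}$ satisfies \eqref{Pazy condition} with value $\tau_0=0$. By Lemma \ref{generation-calA}, $\calA=\mathrm{diag}\bigl(A^{cl},\tfrac{d}{dz}\bigr)$ on $\calX=X\times\B^{q}_h(\Sigma_h,X)$, so $R(\lambda,\calA)$ is block diagonal and, for the sum norm on $\calX$, $\|R(\mu+i\tau,\calA)\|=\max\{\|R(\mu+i\tau,A^{cl})\|,\ \|R(\mu+i\tau,\tfrac{d}{dz})\|\}$; hence $\calA$ satisfies \eqref{Pazy condition} as well.

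\emph{Step 2: transfer to $\calA+\calP$, and conclusion.} By Theorem \ref{Well-posed-Volterra}, $(\calA+\calP,D(\calA))$ generates a $C_0$-semigroup, so $\rho(\calA)\cap\rho(\calA+\calP)$ contains a right half‑plane. In the proof of Theorem \ref{Well-posed-Volterra} it was established that $\calP$ is an admissible observation operator for the semigroup $\calT$ generated by $\calA$; consequently $\calP R(\lambda,\calA)$ is a bounded operator on $\calX$, the identity $D(\calA+\calP)=D(\calA)$ yields
\begin{align*}
R(\lambda,\calA+\calP)=R(\lambda,\calA)+R(\lambda,\calA+\calP)\,\calP R(\lambda,\calA),
\end{align*}
and the Weiss necessary condition \eqref{adm obs weiss} applied to the admissible pair $(\calP,\calA)$ furnishes $b>0$ with $\|\calP R(\lambda,\calA)\|\le b\,({\rm Re}\,\lambda-\om)^{-(1-1/p)}$ for ${\rm Re}\,\lambda>\om$. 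In particular $\|\calP R(\lambda,\calA)\|\to 0$ as ${\rm Re}\,\lambda\to+\infty$ uniformly in ${\rm Im}\,\lambda$. Fixing $\mu$ large enough that $\|\calP R(\mu+i\tau,\calA)\|\le\tfrac12$ for all $\tau\in\R$, the displayed identity forces $\|R(\mu+i\tau,\calA+\calP)\|\le 2\,\|R(\mu+i\tau,\calA)\|$. Combined with Step 1 this gives $\limsup_{|\tau|\to+\infty}\log|\tau|\,\|R(\mu+i\tau,\calA+\calP)\|<\infty$, i.e. $\calA+\calP$ satisfies \eqref{Pazy condition}; Theorem \ref{Pazy} then shows that the semigroup generated by $(\calA+\calP,D(\calA))$ is differentiable for all sufficiently large $t$, hence eventually differentiable.

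\emph{Main obstacle.} The delicate point is the statement used at the start of Step 2, namely that $\calP$ is genuinely an admissible observation operator for $\calT$ (so that \eqref{adm obs weiss} applies and $\|\calP R(\lambda,\calA)\|$ decays uniformly in ${\rm Im}\,\lambda$): this is exactly where hypothesis (H5) — membership of $k$ in the Bergman space — together with the boundary trace estimate for $\B^{q}_h(\Sigma_h,X)$ from \cite{Barta1,Barta2} and the admissibility of the Yosida extension $\P_\Lambda$ for $\T^{cl}$ coming from (H4) and feedback theory, all enter. Everything else is the standard "$\|R_{\mathrm{pert}}\|\le 2\|R\|$ for large ${\rm Re}\,\lambda$ preserves Pazy's condition" mechanism, already present in the excerpt (Remark \ref{Pazy-condition-MV} and the proof of Theorem \ref{main thm differetiable}).
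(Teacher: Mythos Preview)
Your proof is correct and follows essentially the same route as the paper: first pass the Pazy condition from $A$ to $A^{cl}$ via the Desch--Schappacher estimate of Theorem \ref{main thm differetiable}, combine with analyticity of the shift on $\B^{q}_h(\Sigma_h,X)$ to get the Pazy condition for the diagonal generator $\calA$, and then use admissibility of $\calP$ for $\calT$ (established in the proof of Theorem \ref{Well-posed-Volterra}) together with the Miyadera--Voigt resolvent argument of Remark \ref{Pazy-condition-MV} to transfer the Pazy condition to $\calA+\calP$. The paper's proof is precisely this, only stated more tersely by directly citing Remark \ref{Pazy-condition-MV} for Step 2 rather than unpacking the resolvent identity and the decay from \eqref{adm obs weiss} as you do.
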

\begin{proof}
As $A$ satisfies the Pazy condition \eqref{paz-cond}, then
by the proof of Theorem \ref{main thm differetiable}  the operator $A^{cl}$ satisfies also the Pazy condition. We know from \cite{Barta1} that the shift semigroup $(S(t))_{t\ge 0}$ is analytic on the Bergman space $\B^q_h(\Si_h,X)$. This implies that the operator $\calA$ satisfies the Pazy condition. Now as $(\calP,\calA)$ is admissible (see the proof of Theorem \ref{Well-posed-Volterra}), then by Remark \ref{Pazy-condition-MV}, the operator $(\calA+\calP,D(\calA))$ satisfies also the Pazy condition. It follows from Theorem \ref{Pazy} that the operator $(\calA+\calP,D(\calA))$ generates an eventually differentiable semigroup on $\calX$.
\end{proof}

\begin{rem}
In the case of $M\equiv 0$, the boundary integro-differential equation \eqref{Voltera 2} becomes
 \begin{equation}
  \left\{
    \begin{array}{ll}
      \dot{x}(t)=Ax(t)+\displaystyle\int_{0}^{t}k(t-s)Px(s)ds, & \hbox{$t\geq 0,$} \\
      x(0)=x\in X & \hbox{}
    \end{array}
  \right.
\end{equation}
In this case Bart\`{a} \cite{Barta2} showed the differentiability of the solutions by assuming a smooth regularity on the kernel  $k$ that is $k'\in \B^q_h(\Si_h,\C)$. However, in our results  this extra condition on $k$ is not needed any more. On the other hand, the approach of Bart\`{a} is based on small perturbations, and cannot be extended to Desch-Schappacher perturbations. We think that our approach based on feedback theory of regular linear systems is the right way to solve such problems.
\end{rem}

\end{document}